\documentclass{article}
\usepackage[margin=20mm]{geometry}

\usepackage{amsmath, amssymb}
\usepackage{graphicx, wrapfig} 
\usepackage{amsthm}
\usepackage{url}
\usepackage{color}
\usepackage{mathrsfs}
\usepackage{bm}

\theoremstyle{plain}
\newtheorem{theorem}{Theorem}

\theoremstyle{definition}
\newtheorem{definition}{Definition}
\newtheorem{example}{Example}

\newtheorem{proposition}{Proposition}
\newtheorem{lemma}{Lemma}
\newtheorem{corollary}{Corollary}

\usepackage{color}

\title{The crossing matrix and the Burau matrix of pure braids}
\author{Ayaka Shimizu\thanks{Center for Soft Matter Physics, Ochanomizu University, 2-1-1, Otsuka, Bunkyo-ku, Tokyo, 112-8610, Japan. Email: shimizu.ayaka@ocha.ac.jp, shimizu1984@gmail.com} 
and Yoshiro Yaguchi\thanks{Maebashi Institute of Technology, 460-1, Kamisadori, Maebashi, Gunma, 371-0816, Japan. Email: y.yaguchi@maebashi-it.ac.jp}}
\date{\today}

\begin{document}

\maketitle

\begin{abstract}
We show that the first derivative of the Burau matrix at $t=1$ coincides with the Laplacian of the crossing matrix for pure braids. 
We also discuss conjugacy invariants derived from these matrices. 
\end{abstract}

\section{Introduction}
\label{section-intro}

The Burau matrix \footnote{
It is known that the reduced Burau representation is faithful for $m \leq 3$ (\cite{MP}) and unfaithful for $m\geq 5$ (\cite{JAM, LP, SB}). 
Very recently, Bharathram, Birman, and Brendle proved that it is faithful for $m=4$ (\cite{BBB}).}
$f(b)$ of an $m$-braid $b$ is an $m \times m$ matrix-valued invariant with Laurent polynomial entries that can be computed from the Artin generators (\cite{WB}; see also \cite{JSB}). 
Given the Burau matrix $f(b)=[f_{ij}(b;t)]$ of a braid $b$, we define the differentiated Burau matrix as follows. 
\begin{align*}
\frac{d}{dt}f(b)=\left[ \frac{d}{dt} f_{i j} (b;t) \right].
\end{align*}

\noindent In \cite{S}, Stoimenow constructed conjugacy invariants from the traces of the second and third derivatives of the Burau matrix at $t=1$, and used them to study conjugacy relations among braids obtained by iterated exchange moves.
In this paper, we study the first derivative of the Burau matrix at $t=1$ to find the relationship to the {\it crossing matrix}. 
The crossing matrix $C(b)$ of a braid $b$ is an $m \times m$ matrix-valued invariant with integer entries introduced in \cite{Bu} (see Section \ref{section-CM}), and it can be computed from a braid diagram. 
Let $L_{C(b)}$ be the Laplacian (see Section \ref{section-CM}) of the crossing matrix $C(b)$. 
In this paper, we prove the following theorem. 

\medskip 
\begin{theorem}
For any pure braid $b$, we have 
\begin{align*}
\left.\frac{d}{dt}f(b) \right|_{t=1}=L_{C(b)}.
\end{align*}
\label{thm-main}
\end{theorem}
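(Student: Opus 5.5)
The proof goes through an extension to all braids. At $t=1$ the Burau matrix $f(b)|_{t=1}$ is the permutation matrix $P(b)$ of the permutation induced by $b$. For a pure braid $P(b)=I$, so for pure braids the statement is equivalent to
\begin{align*}
\left.\frac{d}{dt}f(b)\right|_{t=1} P(b)^{-1}=L_{C(b)} .
\end{align*}
I will prove this identity for \emph{every} braid $b$, by induction on the word length in the Artin generators. For a general braid I use the definition of $C(b)$ from Section~\ref{section-CM}: strands are labelled by their starting positions, and $c_{ij}$ is the signed count of crossings where strand $i$ passes over strand $j$. I take $L_{C}=D-C$ with $D$ the diagonal matrix of row sums, adjusting the sign convention if the paper's differs.

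\textbf{Step 1: a cocycle identity for the Burau side.} Put $g(b)=\left.\frac{d}{dt}f(b)\right|_{t=1}P(b)^{-1}$. Since $f$ is multiplicative, the product rule gives $(f(b_1b_2))'=f(b_1)'f(b_2)+f(b_1)f(b_2)'$. Evaluating at $t=1$ and multiplying on the right by $P(b_1b_2)^{-1}=P(b_2)^{-1}P(b_1)^{-1}$ yields
\begin{align*}
g(b_1b_2)=g(b_1)+P(b_1)\,g(b_2)\,P(b_1)^{-1}.
\end{align*}
The order of the permutation factors depends on whether the representation acts on rows or columns, and I will fix it to match the convention of \cite{WB} used in the paper.

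\textbf{Step 2: the same cocycle identity for the crossing side.} The crossings of $b_1b_2$ are those of $b_1$ together with those of $b_2$. A strand of $b_2$ starting at position $k$ is the continuation of the strand of $b_1$ that ends at position $k$. Hence $C(b_1b_2)=C(b_1)+Q\,C(b_2)\,Q^{-1}$, where $Q$ is the permutation matrix relabelling endpoints of $b_1$ by starting points. Conjugating by a permutation matrix permutes row sums accordingly, so the Laplacian commutes with this conjugation: $L_{QCQ^{-1}}=Q L_C Q^{-1}$. Since $L$ is linear, $L_{C(b)}$ satisfies the same identity as $g$, provided $Q=P(b_1)$. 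I expect this identification, together with consistent index conventions (over/under, row versus column action, and $P$ versus $P^{-1}$), to be the main obstacle. I will settle it by checking a product of two transpositions directly.

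\textbf{Step 3: generators.} Both sides satisfy the same cocycle identity, and $g(e)=0=L_{C(e)}$. Since $f(bb^{-1})=I$, the identity also handles inverses. So it suffices to check $\sigma_i^{\pm1}$.
\begin{itemize}
\item For $\sigma_i$, the relevant $2\times2$ block of $f$ is $\begin{bmatrix}1-t&t\\1&0\end{bmatrix}$. Its derivative is $\begin{bmatrix}-1&1\\0&0\end{bmatrix}$, and multiplying by the swap gives $\begin{bmatrix}1&-1\\0&0\end{bmatrix}$ in rows and columns $i,i+1$. The crossing matrix has a single entry $c_{i,i+1}=1$, whose Laplacian is exactly this block.
\item For $\sigma_i^{-1}$, the block is $\begin{bmatrix}0&1\\t^{-1}&1-t^{-1}\end{bmatrix}$. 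Its derivative at $1$ is $\begin{bmatrix}0&0\\-1&1\end{bmatrix}$, and after the swap we get $\begin{bmatrix}0&0\\1&-1\end{bmatrix}$. This is the Laplacian of $c_{i+1,i}=-1$.
\end{itemize}
Induction on word length then gives the identity for all braids, and setting $P(b)=I$ gives Theorem~\ref{thm-main}.
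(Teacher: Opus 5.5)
Your route differs from the paper's, and its skeleton is sound. Step 1 is pure algebra. In Step 2 the identification $Q=P(b_1)$ is correct: the paper's permutation matrix has $p_{i\,\pi(i)}=1$, so $(PCP^{T})_{ij}=c_{\pi(i)\pi(j)}$, which is exactly the relabelling of the strands of $b_2$.

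\textbf{The gap is in Step 3}, which does not check the paper's $f$ and $C$.
\begin{itemize}
\item The block you use for $\sigma_i$ is the transpose of the paper's $\begin{bmatrix}1-t&1\\t&0\end{bmatrix}$.
\item In the paper's crossing convention the strand in position $i+1$ passes over in $\sigma_i$. This is visible in the example $C(\sigma_1\sigma_2^{-1})$, where $c_{21}=1$ and $c_{13}=-1$. So the only nonzero entry of $C(\sigma_i)$ is $c_{i+1,i}=1$, and that of $C(\sigma_i^{-1})$ is $c_{i,i+1}=-1$.
\end{itemize}
Your two transpositions cancel, so your computation is self-consistent. For the paper's data, however, the identity $g(b)=L_{C(b)}$ already fails for $b=\sigma_i$. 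On rows and columns $i,i+1$ one gets $g(\sigma_i)=\begin{bmatrix}0&-1\\0&1\end{bmatrix}$, whereas $L_{C(\sigma_i)}=\begin{bmatrix}0&0\\-1&1\end{bmatrix}$. Likewise, the paper's examples for $b=\sigma_1\sigma_2^{-1}$ give $g(b)=L_{C(b)}^T\neq L_{C(b)}$.

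What is true is $g(b)=L_{C(b)}^T$ for every $b\in B_m$. For instance, $g(\sigma_i^{-1})=\begin{bmatrix}-1&0\\1&0\end{bmatrix}=L_{C(\sigma_i^{-1})}^T$. Because $P^{-1}=P^T$, transposition preserves your cocycle identity, so the induction goes through verbatim with $L^T$. But setting $P(b)=I$ then yields only $\Phi(b)=L_{C(b)}^T$. You must invoke the symmetry of $C(b)$ for pure $b$ (Lemma \ref{lem-CM-properties}(1)) to conclude, and that step is missing from your final sentence.

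\textbf{Comparison of the two routes.} With that repair, the arguments compare as follows.
\begin{itemize}
\item The paper never leaves $P_m$. There $f(b)|_{t=1}=I$ and $C$ is additive, so $\Phi$ and $b\mapsto L_{C(b)}$ are honest homomorphisms. It then suffices to compare them on the hook generators $A_{ij}$, where both equal $[\boldsymbol{e}_i-\boldsymbol{e}_j][\boldsymbol{e}_i-\boldsymbol{e}_j]^T$ (Lemmas \ref{lem-Phi-Aij} and \ref{lem-CM-e}). This needs the generating set of $P_m$, the conjugation formula $\Phi(A_{ij})=P\Phi(\sigma_i^2)P^{-1}$ (a special case of your Step 1 identity, applied to the non-pure $q$), and the additivity and symmetry from Lemma \ref{lem-CM-properties}.
\item Your argument replaces homomorphisms on $P_m$ by crossed homomorphisms on all of $B_m$, twisted by the permutation action. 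This reduces the verification to a $2\times 2$ computation on $\sigma_i^{\pm1}$. It also dispenses with hook generators and with the additivity lemma, since your Step 2 proves a twisted version of it directly from diagrams.
\item Your route gives a stronger statement, $\left.\frac{d}{dt}f(b)\right|_{t=1}P(b)^{-1}=L_{C(b)}^T$ for all braids. The price is exactly the row/column and over/under bookkeeping where the slip occurred.
\end{itemize}
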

\medskip 

\noindent As a consequence, the crossing matrix of a pure braid can be recovered from the Burau matrix\footnote{
Recently, Y. Kuno and the second author established an equivalence between the crossing matrix and the extended first Johnson homomorphism from a braid group in \cite{KY}. 
Therefore Theorem \ref{thm-main} also relates the first derivative of the Burau representation at $t=1$ to the extended first Johnson homomorphism from a braid group. 
} as follows. 

\medskip 
\begin{corollary}
For any pure braid $b$, the crossing matrix $C(b)=[c_{i j}]$ is obtained from the Burau matrix $f(b)=[f_{ij}(b;t)]$ as follows. 
\begin{align*}
c_{i j}= 
\begin{cases}
0 & (\text{if } i=j) \\
\left. -\frac{d}{dt}f_{i j}(b;t) \right|_{t=1} & ( \text{if } i\neq j)
\end{cases}
\end{align*}
\label{cor-cij}
\end{corollary}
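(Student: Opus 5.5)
The plan is to read the corollary off from Theorem \ref{thm-main} by comparing entries, so the only real work is to recall how the Laplacian $L_{C(b)}$ is built from $C(b)$ in Section \ref{section-CM}. I will take that definition in its usual form: for an $m\times m$ matrix $C=[c_{ij}]$, $L_C = D_C - C$, where $D_C$ is the diagonal matrix whose $i$-th diagonal entry is the $i$-th row sum $\sum_{k} c_{ik}$. (If the paper instead uses column sums, or omits the diagonal of $C$ from the sum, only the diagonal entries of $L_C$ change; the off-diagonal entries, which are all the corollary uses, stay the same.)

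\textbf{Off-diagonal entries.} For $i\neq j$ the $(i,j)$ entry of $D_C$ is zero, so $(L_{C(b)})_{ij} = -c_{ij}$. Theorem \ref{thm-main} gives $\left.\frac{d}{dt}f_{ij}(b;t)\right|_{t=1} = (L_{C(b)})_{ij} = -c_{ij}$. Multiplying by $-1$ yields the second case of the formula.

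\textbf{Diagonal entries.} The claim $c_{ii}=0$ does not follow from Theorem \ref{thm-main}; it follows from the definition of the crossing matrix. Each entry $c_{ij}$ is a signed count of crossings at which strand $i$ passes over strand $j$. In a braid diagram each strand is monotone in the braid direction, so a strand never crosses itself. Hence there are no crossings with $i=j$, and $c_{ii}=0$. If the paper defines $c_{ii}:=0$ outright, this step is immediate. Because $b$ is pure, strands are labelled consistently from top to bottom, so "strand $i$" is well defined.

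\textbf{Obstacle.} The only potential difficulty is matching conventions. I must check that the Laplacian convention makes the off-diagonal entries equal to $-c_{ij}$ rather than $+c_{ij}$, and that the indexing by strand is the same as the indexing of rows and columns of the Burau matrix. Both are fixed by the definitions in Section \ref{section-CM} and by the convention for $f(b)$. Once they are checked, the corollary is a direct entrywise consequence of Theorem \ref{thm-main}.
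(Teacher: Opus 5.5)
Your proposal is correct and follows the paper's proof: you read the corollary off Theorem~\ref{thm-main} entrywise, using that the off-diagonal entries of $L_{C(b)}=D_{C(b)}-C(b)$ are $-c_{ij}$, while $c_{ii}=0$ holds because the crossing matrix is defined to have zero diagonal. The paper says exactly this: $C(b)$ is recovered from $L_{C(b)}$ by replacing the diagonal with zeros and negating the remaining entries.
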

\medskip 

\noindent We obtain the following corollary from Theorem \ref{thm-main} with a property of the crossing matrix of pure braids (Lemma \ref{lem-CM-properties}). 

\medskip 
\begin{corollary}
Let $b$ be a pure braid. 
Then $\left.\frac{d}{dt}f(b) \right|_{t=1}$ is a symmetric matrix. 
\label{cor-symmetric}
\end{corollary}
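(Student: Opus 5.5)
The statement is Corollary~\ref{cor-symmetric}. It follows almost immediately from Theorem~\ref{thm-main}, so I would reduce it to a property of the crossing matrix. By Theorem~\ref{thm-main}, for a pure braid $b$,
\[
\left.\frac{d}{dt}f(b)\right|_{t=1} = L_{C(b)}.
\]
It therefore suffices to show that $L_{C(b)}$ is symmetric.

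The Laplacian of a square matrix $C=[c_{ij}]$ has off-diagonal entries $-c_{ij}$ for $i\neq j$, and diagonal entries given by a row sum (or column sum) of $C$. This is consistent with Corollary~\ref{cor-cij}, which reads $c_{ij}$ off the off-diagonal entries as $-\frac{d}{dt}f_{ij}|_{t=1}$. Diagonal entries never affect symmetry, so $L_{C(b)}$ is symmetric exactly when $c_{ij}=c_{ji}$ for all $i\neq j$, i.e.\ when $C(b)$ itself is symmetric.

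The key step is to invoke the property of crossing matrices of pure braids in Lemma~\ref{lem-CM-properties}: $C(b)$ is symmetric when $b$ is pure. I expect this lemma to state precisely that symmetry. If I had to prove it, the reason would be the following.
\begin{itemize}
\item The entry $c_{ij}$ counts, with signs, the crossings where strand $i$ passes over strand $j$.
\item For a pure braid, strands $i$ and $j$ return to their starting positions, so their relative winding is an integer.
\item Each full twist between them contributes equally to $c_{ij}$ and to $c_{ji}$, each being their linking number.
\item To check this, I would verify that $c_{ij}-c_{ji}$ is zero on the generators $A_{ij}$ of the pure braid group and is additive under composition. For pure braids additivity holds without permuting indices.
\end{itemize}

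Combining these steps, $C(b)$ is symmetric, hence $L_{C(b)}$ is symmetric, hence the derivative matrix at $t=1$ is symmetric. There is no real obstacle. The only care needed is matching the paper's convention for the diagonal of the Laplacian, and that does not matter for symmetry.
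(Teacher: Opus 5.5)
Your proposal is correct and follows the paper's proof exactly. Theorem~\ref{thm-main} identifies the derivative matrix with $L_{C(b)}$, Lemma~\ref{lem-CM-properties}(1) gives the symmetry of $C(b)$, and that symmetry passes to the Laplacian because its off-diagonal part is $-C(b)$.
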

\medskip

The rest of this paper is organized as follows. 
In Section \ref{section-braid}, we recall basic properties of braids and pure braids. 
In Section \ref{section-BR}, we review the Burau representation and study its differentiated version. 
In Section \ref{section-CM}, we review the crossing matrix of braids. 
In Section \ref{section-pf}, we prove the main theorem and corollaries. 
In Section \ref{section-conjugacy}, we discuss conjugacy invariants. 
ChatGPT was used as an exploratory tool that led the authors to Theorem \ref{thm-main}. 
The theorem and its proof were subsequently established and independently verified by the authors.

\section{Braids and pure braids}
\label{section-braid}

An {\it $m$-braid} consists of $m$ mutually disjoint strands running monotonically from $m$ prescribed endpoints on the upper bar to $m$ prescribed endpoints on the lower bar. 
A regular projection of an $m$-braid onto a plane, together with over/under information at each crossing, is called an {\it $m$-braid diagram}. 
Each braid diagram can be represented by a word in the generators $\sigma_i^{\pm 1}$, where $\sigma_i^{\pm 1}$ are illustrated in Figure \ref{fig-s}. 
\begin{figure}[ht]
\centering
\includegraphics[width=6cm]{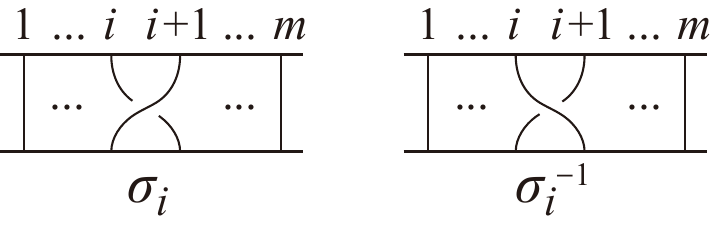}
\caption{The Artin generators $\sigma_i$ and $\sigma_i^{-1}$. }
\label{fig-s}
\end{figure}
It is well known that two $m$-braid diagrams represent the same braid if and only if they are related by a finite sequence of the following three types of transformations (\cite{Artin-1, Artin-2}). 
\begin{itemize}
\item[(1)] $\sigma_i^{\varepsilon} \sigma_i^{- \varepsilon}=1$ \ ($\varepsilon \in \{ \pm 1 \}$, $i \in \{ 1, 2, \dots , m-1 \}$), 
\item[(2)] $\sigma_i \sigma_{i+1} \sigma_i = \sigma_{i+1} \sigma_i \sigma_{i+1}$ \ ($i \in \{ 1, 2, \dots , m-2 \}$),
\item[(3)] $\sigma_i\sigma_j=\sigma_j \sigma_i$ \ ($i, j \in \{ 1, 2, \dots , m-1 \}$, $|j-i|>1$). 
\end{itemize}

\noindent These relations give a presentation of the braid group, denoted by $B_m$. 
Namely, 
\begin{align*}
B_m= \Bigg\langle \sigma_1, \sigma_2, \dots , \sigma_{m-1} \ \Bigg| \ 
\begin{matrix}\sigma_i\sigma_{i+1}\sigma_i=\sigma_{i+1}\sigma_i\sigma_{i+1} & (1 \leq i \leq m-2), \  \\
\sigma_i\sigma_j=\sigma_j\sigma_i & (|i-j|>1)
\end{matrix}
\Bigg\rangle .
\end{align*}

For an $m$-braid $b$, we call the strand that has the upper endpoint in the $i^{th}$ position from the left on the upper bar the {\it $i^{th}$ strand of $b$}. 
Each braid $b$ induces a permutation on $\{ 1, 2, \dots , m \}$ as follows. 
If the $i^{th}$ strand of $b$ has the lower endpoint at the $j^{th}$ position from the left on the lower bar, we set $\pi(i)=j$. 
The resulting permutation $\pi (1, 2, \dots , m)=(\pi(1), \pi(2), \dots , \pi(m))$ is called the {\it braid permutation of $b$}. 
A {\it pure braid} is a braid whose braid permutation is the identity. 
Let $P_m$ denote the set of pure $m$-braids. 
Then $P_m$ is a subgroup of $B_m$, called the  pure braid group on $m$ strands. 
Let 
\begin{align*}
A_{i j}= \sigma_{j-1} \sigma_{j-2} \dots \sigma_{i+1} \sigma_i^2 \sigma_{i+1}^{-1} \dots \sigma_{j-2}^{-1} \sigma_{j-1}^{-1} \in P_m
\end{align*}
($1 \leq i<j \leq m$) as shown in Figure \ref{fig-hook}.  
When $j=i+1$, we assume that $A_{i \ i+1}= \sigma_i^2$. 
We call $A_{i j}$ a {\it hook generator}. 
\begin{figure}[ht]
\centering
\includegraphics[width=5cm]{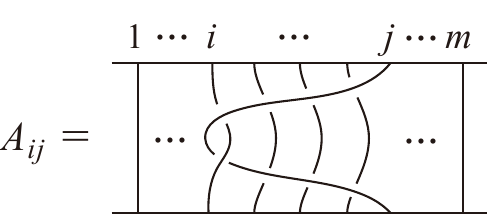}
\caption{A hook generator $A_{i j}=\sigma_{j-1} \sigma_{j-2} \dots \sigma_{i+1} \sigma_i^2 \sigma_{i+1}^{-1} \dots \sigma_{j-2}^{-1} \sigma_{j-1}^{-1}$. }
\label{fig-hook}
\end{figure}
The following lemma is well known (see, e.g., \cite{JSB}). 

\medskip 
\begin{lemma}
The pure braid group $P_m$ is generated by $A_{i j}$ $(1 \leq i<j \leq m)$. 
\end{lemma}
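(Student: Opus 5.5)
The plan is to prove the lemma by the standard Reidemeister--Schreier-style argument: show that the subgroup $H \subseteq P_m$ generated by the hook generators $A_{ij}$ is all of $P_m$. The proof goes by induction on the number of strands $m$, using the ``forgetting the last strand'' map. The base case $m=1$ is trivial, and for $m=2$ we have $P_2=\langle \sigma_1^2\rangle = \langle A_{12}\rangle$.

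For the inductive step, consider the homomorphism $\rho\colon P_m \to P_{m-1}$ that deletes the $m^{th}$ strand. This is well defined on diagrams, since deleting a strand respects relations (1)--(3) up to isotopy, and the result is still pure.

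\emph{Surjectivity and the splitting.} Including $B_{m-1}\subset B_m$ via $\sigma_1,\dots,\sigma_{m-2}$ gives a section $s\colon P_{m-1}\to P_m$ with $\rho\circ s=\mathrm{id}$. Under $s$, the hook generators of $P_{m-1}$ go to the $A_{ij}$ with $j\le m-1$. By induction, $s(P_{m-1})\subseteq H$.

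\emph{The kernel.} The key step is to show that $\ker\rho$ is generated by $A_{1m},\dots,A_{m-1,m}$. Geometrically, $\ker\rho$ consists of the pure braids in which strands $1,\dots,m-1$ can be straightened to vertical lines, with only the last strand wandering. Such a braid is a loop of the $m^{th}$ puncture in the plane with $m-1$ punctures removed, so $\ker\rho\cong\pi_1(\mathbb{C}\setminus\{m-1\text{ points}\})$, which is free on loops around each puncture. The loop of the last strand around the $i^{th}$ puncture, passing behind the intermediate strands, is exactly the picture of $A_{im}$ in Figure~\ref{fig-hook}. Hence $\ker\rho\subseteq H$.

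\emph{Conclusion.} For $b\in P_m$, we have $b\cdot s(\rho(b))^{-1}\in\ker\rho\subseteq H$ and $s(\rho(b))\in H$. Therefore $b\in H$, which completes the induction.

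The main obstacle is making the identification of $\ker\rho$ with a free group rigorous using only diagrams. This requires checking that an arbitrary braid in which the first $m-1$ strands form the trivial braid can be isotoped so that those strands are straight. If I wanted to avoid the topology, I would instead try a purely algebraic route: show that $H$ is normalized by each $\sigma_k^{\pm1}$ by writing $\sigma_k^{\pm1}A_{ij}\sigma_k^{\mp1}$ explicitly as a word in the hook generators (the Artin conjugation formulas). Then $H$ is normal in $B_m$, and $B_m/H$ is a quotient of $\langle \sigma_1,\dots,\sigma_{m-1}\mid \sigma_i^2,\ \text{braid relations}\rangle\cong S_m$, since $\sigma_i^2=A_{i\,i+1}\in H$. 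This forces $[B_m:H]\le m! = [B_m:P_m]$, and with $H\subseteq P_m$ it gives $H=P_m$. That alternative trades the topology for a tedious case analysis of the conjugation formulas, which would then become the main obstacle.
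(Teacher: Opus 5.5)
Your argument is correct and is the standard one: the paper gives no proof of its own and simply cites Birman's book, where the lemma is proved exactly as you propose. Birman combs along the forgetful map $\rho\colon P_m\to P_{m-1}$ with its section, using the Fadell--Neuwirth fibration to show that $\ker\rho$ is the image of the free group $\pi_1(\mathbb{C}\setminus\{m-1\text{ points}\})$, whose standard loops are $A_{1m},\dots,A_{m-1,m}$; only exactness at $P_m$ is needed for generation. One cosmetic slip: with the paper's conventions the $m^{th}$ strand of $A_{im}$ passes \emph{over} strands $i+1,\dots,m-1$ (see the proof of Lemma~\ref{lem-CM-e}), not behind them, but loops taken consistently on either side form a free basis, so the argument is unaffected.
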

\medskip

\section{Burau representation}
\label{section-BR}

\subsection{Burau representation}

Let $\Lambda = \mathbb{Z}[ t, t^{-1}]$ be the ring of Laurent polynomials in $t$. 
Let $GL_m( \Lambda)$ denote the set of $m \times m$ invertible matrices over $\Lambda$. 
Then $GL_m(\Lambda)$ is a group under matrix multiplication. 
The {\it (unreduced) Burau representation}\footnote{
There are two versions of Burau representation: the unreduced and reduced Burau representations. 
In this paper, we refer to the unreduced version simply as the Burau representation. 
}
is a group homomorphism $f : B_m \to GL_m(\Lambda)$ defined on the Artin generators by 
\begin{align*}
f ( \sigma_i)= I_{i-1} \oplus 
\begin{bmatrix}
1-t & 1 \\
t & 0
\end{bmatrix}
\oplus I_{m-i-1} \ (1 \leq i \leq m-1),  
\end{align*}
where $I_k$ denotes the $k \times k$ identity matrix and $I_0$ denotes the empty matrix. 
The inverse of $f (\sigma_i)$ is given by 
\begin{align*}
f(\sigma_i)^{-1}=f ( \sigma_i^{-1})= I_{i-1} \oplus 
\begin{bmatrix}
0 & t^{-1} \\
1 & 1-t^{-1}
\end{bmatrix}
\oplus I_{m-i-1}.  
\end{align*}

\medskip 
\begin{example}
For $\sigma_1\sigma_2^{-1} \in B_3$, we have 
\begin{align*}
f(\sigma_1\sigma_2^{-1})=f(\sigma_1)f(\sigma_2)^{-1}=
\begin{bmatrix}
1-t & 1 & 0 \\
t & 0 & 0 \\
0 & 0 & 1 
\end{bmatrix}
\begin{bmatrix}
1 & 0 & 0 \\
0 & 0 & t^{-1} \\
0 & 1 & 1-t^{-1} 
\end{bmatrix}
=
\begin{bmatrix}
1-t & 0 & t^{-1} \\
t & 0 & 0 \\
0 & 1 & 1-t^{-1}
\end{bmatrix}.
\end{align*}
\end{example}
\medskip 

\noindent We call such a matrix $f(b)$ the {\it Burau matrix of $b$}. \\

Let $\pi$ be a permutation on $\{ 1, 2, \dots , m \}$. 
The {\it permutation matrix} $[p_{ij}]$ of $\pi$ is defined by 
\begin{align*}
p_{ij}=
\begin{cases}
1 & (\text{if } j= \pi(i)) \\
0 & ( \text{otherwise}).
\end{cases}
\end{align*}
For a braid $b$, we refer to the permutation matrix of the braid permutation of $b$ simply as the {\it permutation matrix of $b$}. 
The following proposition is well known (see, e.g., Lemma 2.2 in \cite{NS}).  

\medskip 
\begin{proposition}
For any $b \in B_m$, the matrix $\left.f(b)\right|_{t=1}$ is the permutation matrix of $b$. 
In particular, for any $b \in P_m$, we have $\left.f(b)\right|_{t=1}=I_m$. 
\label{prop-perm-matrix}
\end{proposition}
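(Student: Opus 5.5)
The statement to prove is Proposition \ref{prop-perm-matrix}: $f(b)|_{t=1}$ is the permutation matrix of $b$. The plan is to reduce it to the Artin generators and then use that both sides are multiplicative.

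\textbf{Generators.} Evaluation at $t=1$ is a ring homomorphism $\Lambda \to \mathbb{Z}$, so $b \mapsto f(b)|_{t=1}$ is a group homomorphism $B_m \to GL_m(\mathbb{Z})$. At $t=1$ we get
\[
f(\sigma_i)|_{t=1}=I_{i-1}\oplus \begin{bmatrix}0&1\\1&0\end{bmatrix}\oplus I_{m-i-1}
\]
and $f(\sigma_i^{-1})|_{t=1}$ is the same matrix. The braid permutation of $\sigma_i^{\pm1}$ is the transposition $(i\ i{+}1)$, because the $i^{th}$ strand ends at position $i+1$ and vice versa. Its permutation matrix, with $p_{jk}=1$ iff $k=\pi(j)$, is exactly this matrix. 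So the claim holds for $\sigma_i^{\pm 1}$.

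\textbf{Multiplicativity.} It remains to show that $b\mapsto P(b)$, the permutation matrix of $b$, is multiplicative in the same order as $f$. For a product $b=b_1b_2$, with $b_1$ drawn on top, the $i^{th}$ strand of $b$ first reaches position $\pi_1(i)$, then continues as the $\pi_1(i)^{th}$ strand of $b_2$. Hence $\pi_b=\pi_2\circ\pi_1$.

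Now compute entries:
\[
(P(b_1)P(b_2))_{ik}=\sum_j [j=\pi_1(i)][k=\pi_2(j)]=[k=\pi_2(\pi_1(i))],
\]
which is $P(b)_{ik}$. So $P$ is a homomorphism $B_m\to GL_m(\mathbb{Z})$ with the same composition convention as $f$.

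\textbf{Conclusion.} Two homomorphisms that agree on the generators $\sigma_i$ agree on all of $B_m$. Therefore $f(b)|_{t=1}=P(b)$ for every $b\in B_m$. If $b\in P_m$, its braid permutation is the identity, so $P(b)=I_m$.

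The only real obstacle is keeping the order conventions consistent. The braid word is read top to bottom, and the matrix convention is $p_{ij}=1$ iff $j=\pi(i)$. With this convention the product $P(b_1)P(b_2)$ corresponds to $\pi_2\circ\pi_1$, which is the order in which the strands are actually traced; the other convention would give the reverse order. For the pure-braid statement this does not matter anyway.
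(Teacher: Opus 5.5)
Your proof is correct. The homomorphism $b\mapsto f(b)|_{t=1}$ agrees on the $\sigma_i$ with $b\mapsto P(b)$, and your computation $(P(b_1)P(b_2))_{ik}=[k=\pi_2(\pi_1(i))]$ correctly shows that $P$ is multiplicative in the same order. Your order convention also matches the paper's example $\sigma_1\sigma_2^{-1}$, whose braid permutation is $\pi=(3,1,2)$.

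The paper does not prove this itself; it cites it as well known (Lemma 2.2 of [NS]). Your argument is the standard one, and it is the same reasoning the paper uses in the proof of Lemma~\ref{lem-Phi-Aij}, where it writes the permutation matrix of $q$ as $S_{j-1}S_{j-2}\cdots S_{i+1}$ with $S_k=f(\sigma_k)|_{t=1}$.
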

\medskip 

\begin{example}
For $\sigma_1 \sigma_2^{-1} \in B_3$ with braid permutation $\pi(1,2,3)=(3,1,2)$, 
\begin{align*}
\left.f(\sigma_1\sigma_2^{-1})\right|_{t=1}= \left. f(\sigma_1) \right|_{t=1} \left. f(\sigma_2^{-1}) \right|_{t=1} =
\begin{bmatrix}
0 & 1 & 0 \\
1 & 0 & 0 \\
0 & 0 & 1 
\end{bmatrix}
\begin{bmatrix}
1 & 0 & 0 \\
0 & 0 & 1 \\
0 & 1 & 0 
\end{bmatrix}
=
\begin{bmatrix}
0 & 0 & 1 \\
1 & 0 & 0 \\
0 & 1 & 0 
\end{bmatrix}.
\end{align*}
\end{example}

\subsection{Differentiated Burau representation}

For the Burau representation $f(b)=[f_{ij}(b;t)]$ of a braid $b \in B_m$, recall that the differentiated Burau matrix is defined as follows: 
\begin{align*}
\frac{d}{dt}f(b)=\left[ \frac{d}{dt} f_{i j} (b;t) \right] .
\end{align*}
We denote the differentiated Burau matrix evaluated at $t=1$ by 
\begin{align*}
\Phi (b) =\left. \frac{d}{dt} f(b) \right|_{t=1} \in M_m (\mathbb{Z}), 
\end{align*}
where $M_m( \mathbb{Z})$ denotes the set of $m \times m$ integer matrices. 

\medskip 
\begin{example}
For $b=\sigma_1\sigma_2^{-1} \in B_3$, we have 
\begin{align*}
\frac{d}{dt}f(b)= 
\begin{bmatrix}
-1 & 0 & -t^{-2} \\
1 & 0 & 0 \\
0 & 0 & t^{-2} 
\end{bmatrix}, \ \Phi (b)=
\begin{bmatrix}
-1 & 0 & -1 \\
1 & 0 & 0 \\
0 & 0 & 1 
\end{bmatrix}.
\end{align*}
\end{example}
\medskip

\noindent Let $M_m(\Lambda )$ denote the set of $m \times m$ matrices over $\Lambda$. 
In this section, we use the following product rule for matrix differentiation. 

\medskip 
\begin{lemma}
For $C, \ D \in M_m(\Lambda )$, we have 
\begin{align*}
\frac{d}{dt}(CD)= \left( \frac{d}{dt}C \right) D+C \left( \frac{d}{dt}D \right) .
\end{align*}
\label{lem-dt-Lei}
\end{lemma}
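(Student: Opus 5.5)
The statement to prove is Lemma \ref{lem-dt-Lei}, the product rule for differentiating a product of matrices over $\Lambda$. I will reduce it entrywise to the ordinary product rule for Laurent polynomials.

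Write $C=[c_{ik}(t)]$ and $D=[d_{kj}(t)]$ with entries in $\Lambda$. By the definition of matrix multiplication, the $(i,j)$ entry of $CD$ is
\begin{align*}
(CD)_{ij}=\sum_{k=1}^{m} c_{ik}(t)\, d_{kj}(t).
\end{align*}
The differentiated matrix is defined entrywise, so $\frac{d}{dt}(CD)$ has $(i,j)$ entry $\frac{d}{dt}\sum_k c_{ik}d_{kj}$.

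Next I will use two facts about the formal derivative on $\Lambda$: it is $\mathbb{Z}$-linear, and it satisfies the Leibniz rule $\frac{d}{dt}(pq)=p'q+pq'$ for $p,q\in\Lambda$. Linearity is immediate from the termwise definition $\frac{d}{dt}t^n=nt^{n-1}$. For Leibniz, by bilinearity it suffices to check monomials: $\frac{d}{dt}(t^at^b)=(a+b)t^{a+b-1}=at^{a-1}t^b+t^a\,bt^{b-1}$, which holds for all $a,b\in\mathbb{Z}$, including negative exponents. Alternatively, one can regard the entries as rational functions of $t$ on $t\neq 0$ and invoke the calculus product rule.

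Applying these two facts gives
\begin{align*}
\frac{d}{dt}(CD)_{ij}=\sum_{k}\Big(c_{ik}'\,d_{kj}+c_{ik}\,d_{kj}'\Big).
\end{align*}
The first sum is exactly the $(i,j)$ entry of $\left(\frac{d}{dt}C\right)D$, and the second is the $(i,j)$ entry of $C\left(\frac{d}{dt}D\right)$. Since these agree for every $(i,j)$, the matrix identity follows.

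There is no real obstacle here. The only point needing care is that the Leibniz rule holds for Laurent polynomials, including negative powers of $t$, and the monomial check above settles it.
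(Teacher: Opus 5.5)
Your proposal is correct and follows the same route as the paper: you expand the $(i,j)$ entry of $CD$ as $\sum_k c_{ik}d_{kj}$, apply the Leibniz rule to each term, and split the result into the $(i,j)$ entries of $\left(\frac{d}{dt}C\right)D$ and $C\left(\frac{d}{dt}D\right)$. The only difference is that you also check the Leibniz rule on Laurent monomials, including negative exponents; the paper simply assumes this fact.
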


\begin{proof}
Let $C=[c_{ij}], D=[d_{ij}]$. 
By the Leibniz rule, the $(i,j)$ entry of $\frac{d}{dt}(CD)$ is 
\begin{align*}
\frac{d}{dt}\left( \sum_{k=1}^m c_{ik}d_{kj} \right) = \sum_{k=1}^m \left\{ \left( \frac{d}{dt} c_{ik}\right) d_{kj} + c_{ik} \left( \frac{d}{dt} d_{kj} \right) \right\} = \sum_{k=1}^m \left\{ \left( \frac{d}{dt} c_{ik} \right) d_{kj} \right\} + \sum_{k=1}^m \left\{ c_{ik} \left( \frac{d}{dt} d_{kj} \right) \right\}.
\end{align*}
This is the $(i,j)$ entry of $\left( \frac{d}{dt}C \right) D+C \left( \frac{d}{dt}D \right)$. 
\end{proof}
\medskip

\noindent The following proposition shows that $\Phi : P_m \to M_m( \mathbb{Z})$ is a group homomorphism. 

\medskip 
\begin{proposition}
For any $b, c \in P_m$, we have $\Phi (bc)=\Phi (b) + \Phi (c)$. 
\label{prop-Phi-hom}
\end{proposition}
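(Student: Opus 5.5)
The proof is a direct computation from the product rule for matrix differentiation (Lemma \ref{lem-dt-Lei}) together with Proposition \ref{prop-perm-matrix}. Since $f$ is a homomorphism, $f(bc)=f(b)f(c)$ as matrices over $\Lambda$. Differentiating both sides entrywise with respect to $t$ and applying Lemma \ref{lem-dt-Lei} with $C=f(b)$ and $D=f(c)$ gives
\begin{align*}
\frac{d}{dt}f(bc)=\left(\frac{d}{dt}f(b)\right)f(c)+f(b)\left(\frac{d}{dt}f(c)\right).
\end{align*}

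Next we evaluate at $t=1$. Evaluation at $t=1$ is a ring homomorphism $\Lambda\to\mathbb{Z}$, so it commutes with matrix sums and products. We therefore obtain
\begin{align*}
\Phi(bc)=\Phi(b)\,\left.f(c)\right|_{t=1}+\left.f(b)\right|_{t=1}\,\Phi(c).
\end{align*}

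Since $b,c\in P_m$, Proposition \ref{prop-perm-matrix} gives $\left.f(b)\right|_{t=1}=\left.f(c)\right|_{t=1}=I_m$. Substituting these yields $\Phi(bc)=\Phi(b)+\Phi(c)$, as claimed.

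No step is a serious obstacle; the only point needing care is that differentiation and evaluation commute correctly with matrix products. This is ensured by Lemma \ref{lem-dt-Lei}, which holds over Laurent polynomials because the Leibniz rule holds for $t^{-1}$ as well. For general braids the same computation gives the twisted formula $\Phi(bc)=\Phi(b)P_c+P_b\Phi(c)$, where $P_b$ and $P_c$ are the permutation matrices of $b$ and $c$. This explains why the additivity, and hence the homomorphism $P_m\to (M_m(\mathbb{Z}),+)$, is restricted to pure braids.
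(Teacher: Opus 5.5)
Your proof is correct and follows the paper's argument: differentiate $f(bc)=f(b)f(c)$ via Lemma \ref{lem-dt-Lei}, evaluate at $t=1$, and use Proposition \ref{prop-perm-matrix} to replace $f(b)|_{t=1}$ and $f(c)|_{t=1}$ by $I_m$. Your added remark on the twisted formula $\Phi(bc)=\Phi(b)P_c+P_b\Phi(c)$ for general braids is also correct, and it explains why additivity is restricted to pure braids.
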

\medskip 

\begin{proof}
Since $f$ is a group homomorphism, we have $f(bc)=f(b)f(c)$. 
By Lemma \ref{lem-dt-Lei}, we have 
\begin{align*}
\frac{d}{dt} f(bc)= \frac{d}{dt} \left( f(b)f(c) \right) = \left( \frac{d}{dt}f(b) \right)f(c)+f(b) \left( \frac{d}{dt}f(c) \right).
\end{align*}
Since $b, c \in P_m$, we have $\left. f(b) \right|_{t=1}=\left. f(c) \right|_{t=1}= I_m$ by Proposition \ref{prop-perm-matrix}.  
Therefore, 
\begin{align*}
\Phi (bc) = \left. \frac{d}{dt}f(bc) \right|_{t=1}=\left( \left.\frac{d}{dt}f(b) \right|_{t=1} \right) \left.f(c) \right|_{t=1}+\left. f(b) \right|_{t=1} \left( \left.\frac{d}{dt}f(c)\right|_{t=1} \right) 
= \Phi (b) I_m+I_m \Phi(c)=\Phi(b)+\Phi (c).
\end{align*}
\end{proof}

\begin{corollary}
For any $b \in P_m$, we have $\Phi(b^{-1})=-\Phi(b)$. 
\label{cor-Phi-1}
\end{corollary}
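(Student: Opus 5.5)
Corollary \ref{cor-Phi-1} follows directly from Proposition \ref{prop-Phi-hom} by applying the homomorphism property to the product $b b^{-1}$. Since $P_m$ is a subgroup of $B_m$, the inverse $b^{-1}$ is also a pure braid whenever $b$ is. So Proposition \ref{prop-Phi-hom} applies to the pair $(b, b^{-1})$ and gives
\begin{align*}
\Phi(b b^{-1}) = \Phi(b) + \Phi(b^{-1}).
\end{align*}

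Next I compute the left-hand side directly. We have $b b^{-1} = 1$, the trivial braid, and $f(1) = I_m$ because $f$ is a group homomorphism. The identity matrix has constant entries, so $\frac{d}{dt} f(1)$ is the zero matrix, and hence $\Phi(1) = O$. Alternatively, taking $b = c = 1$ in Proposition \ref{prop-Phi-hom} gives $\Phi(1) = 2\Phi(1)$, which also forces $\Phi(1) = O$.

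Combining the two displays gives $O = \Phi(b) + \Phi(b^{-1})$, that is, $\Phi(b^{-1}) = -\Phi(b)$. There is no real obstacle here. The only point to check is that $b^{-1}$ is pure, so that Proposition \ref{prop-Phi-hom}, whose proof uses $f(c)|_{t=1} = I_m$ from Proposition \ref{prop-perm-matrix}, actually applies.
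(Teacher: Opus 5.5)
Your proof is correct and is essentially the paper's argument: apply Proposition \ref{prop-Phi-hom} to $bb^{-1}$ and use $\Phi(\mathrm{id})=O$. You additionally spell out why $b^{-1}\in P_m$ and why $\Phi$ of the trivial braid vanishes, which the paper leaves implicit.
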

\medskip 

\begin{proof}
By Proposition \ref{prop-Phi-hom}, $\Phi(bb^{-1})=\Phi(b)+\Phi(b^{-1})=\Phi(id)=O$. 
Hence $\Phi(b^{-1})=-\Phi(b)$. 
\end{proof}
\medskip 

\noindent Let 
\begin{align*}
\boldsymbol{e}_1=
\begin{bmatrix}
1 \\
0 \\
\vdots \\
0
\end{bmatrix}, 
\boldsymbol{e}_2=
\begin{bmatrix}
0 \\
1 \\
\vdots \\
0
\end{bmatrix}, \dots ,
\boldsymbol{e}_m=
\begin{bmatrix}
0 \\
0 \\
\vdots \\
1
\end{bmatrix} \in \mathbb{Z}^m.
\end{align*}
For the hook generator $A_{i \ i+1}$, the following formula holds. 

\medskip 
\begin{lemma}
For the hook generator $A_{i \ i+1}= \sigma_i^2 \in P_m$ ($1 \leq i \leq m-1$), we have 
\begin{align*}
\Phi(A_{i \ i+1})=[ \boldsymbol{e}_i-\boldsymbol{e}_{i+1}][ \boldsymbol{e}_i-\boldsymbol{e}_{i+1}]^T.
\end{align*}
\label{lem-Aii}
\end{lemma}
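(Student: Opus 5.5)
We compute $\Phi(A_{i\,i+1})$ directly from the definition. Since $f(\sigma_i)$ is block diagonal with identity blocks outside rows and columns $i,i+1$, the same holds for $f(\sigma_i^2)=f(\sigma_i)^2$. Differentiating annihilates the identity blocks, so only the $2\times 2$ block in positions $i,i+1$ contributes to $\Phi(A_{i\,i+1})$.

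\textbf{Step 1: square the block.} With $B=\begin{bmatrix}1-t & 1\\ t & 0\end{bmatrix}$,
\begin{align*}
B^2=\begin{bmatrix}(1-t)^2+t & 1-t\\ t(1-t) & t\end{bmatrix}=\begin{bmatrix}1-t+t^2 & 1-t\\ t-t^2 & t\end{bmatrix}.
\end{align*}
At $t=1$ this equals $I_2$, as Proposition~\ref{prop-perm-matrix} predicts.

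\textbf{Step 2: differentiate entrywise and evaluate at $t=1$.}
\begin{align*}
\frac{d}{dt}B^2=\begin{bmatrix}-1+2t & -1\\ 1-2t & 1\end{bmatrix}, \qquad \left.\frac{d}{dt}B^2\right|_{t=1}=\begin{bmatrix}1 & -1\\ -1 & 1\end{bmatrix}.
\end{align*}

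\textbf{Step 3: compare with the right-hand side.} The rank-one matrix $[\boldsymbol{e}_i-\boldsymbol{e}_{i+1}][\boldsymbol{e}_i-\boldsymbol{e}_{i+1}]^T$ has entries $1$ at $(i,i)$ and $(i+1,i+1)$, entries $-1$ at $(i,i+1)$ and $(i+1,i)$, and zeros elsewhere. This matches the embedded block from Step 2, which proves the lemma.

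Alternatively, Lemma~\ref{lem-dt-Lei} gives $\Phi(\sigma_i^2)=f'(1)f(1)+f(1)f'(1)$ restricted to the block. This avoids squaring, but the direct computation is just as short.

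No step is a real obstacle. The only care needed is to check that the off-diagonal blocks of $f(\sigma_i)$ are zero, so the squared matrix stays block diagonal, and to place the $2\times2$ block correctly at rows and columns $i,i+1$.
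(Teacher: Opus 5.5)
Your proposal is correct and matches the paper's proof: the paper also squares the $2\times 2$ block of $f(\sigma_i)$, differentiates entrywise, and evaluates at $t=1$ to obtain $O_{i-1}\oplus\begin{bmatrix}1&-1\\-1&1\end{bmatrix}\oplus O_{m-i-1}$. Your alternative via Lemma~\ref{lem-dt-Lei} also works, provided you use that $f(\sigma_i)|_{t=1}$ is the transposition matrix rather than $I_m$.
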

\medskip 

\begin{example}
Let $A_{23}=\sigma_2^2 \in P_5$. Then 
\begin{align*}
f(\sigma_2^2)= 
\begin{bmatrix}
1 & 0 & 0 & 0 & 0 \\
0 & -t+1 & 1 & 0 & 0 \\
0 & t & 0 & 0 & 0 \\
0 & 0 & 0 & 1 & 0 \\
0 & 0 & 0 & 0 & 1 
\end{bmatrix}^2 = 
\begin{bmatrix}
1 & 0 & 0 & 0 & 0 \\
0 & t^2-t+1 & -t+1 & 0 & 0 \\
0 & -t^2+t & t & 0 & 0 \\
0 & 0 & 0 & 1 & 0 \\
0 & 0 & 0 & 0 & 1 
\end{bmatrix}, \ \frac{d}{dt}f(\sigma_2^2)=
\begin{bmatrix}
0 & 0 & 0 & 0 & 0 \\
0 & 2t-1 & -1 & 0 & 0 \\
0 & -2t+1 & 1 & 0 & 0 \\
0 & 0 & 0 & 0 & 0 \\
0 & 0 & 0 & 0 & 0 
\end{bmatrix}
\end{align*}
and 
\begin{align*}
\Phi (\sigma_2^2)=
\begin{bmatrix}
0 & 0 & 0 & 0 & 0 \\
0 & 1 & -1 & 0 & 0 \\
0 & -1 & 1 & 0 & 0 \\
0 & 0 & 0 & 0 & 0 \\
0 & 0 & 0 & 0 & 0 
\end{bmatrix}=
\begin{bmatrix}
0 \\
1 \\
-1 \\
0 \\
0
\end{bmatrix}
\begin{bmatrix}
0 \ 1   -1 \ 0 \ 0
\end{bmatrix}=
[ \boldsymbol{e}_2-\boldsymbol{e}_{3}][ \boldsymbol{e}_2-\boldsymbol{e}_{3}]^T .
\end{align*}
\end{example}
\medskip 

\noindent We now prove Lemma \ref{lem-Aii}.

\medskip
\begin{proof}[Proof of Lemma \ref{lem-Aii}.]
We have
\begin{align*}
f(\sigma_i^2) & = I_{i-1} \oplus 
\begin{bmatrix}
t^2-t+1 & -t+1 \\
-t^2+t & t
\end{bmatrix}
\oplus I_{m-i-1}, \\
\frac{d}{dt}f(\sigma_i^2) & = O_{i-1} \oplus 
\begin{bmatrix}
2t-1 & -1 \\
-2t+1 & 1
\end{bmatrix}
\oplus O_{m-i-1},
\end{align*}
and therefore 
\begin{align*}
\Phi(\sigma_i^2)= O_{i-1} \oplus 
\begin{bmatrix}
1 & -1 \\
-1 & 1
\end{bmatrix}
\oplus O_{m-i-1}=[ \boldsymbol{e}_i-\boldsymbol{e}_{i+1}][ \boldsymbol{e}_i-\boldsymbol{e}_{i+1}]^T.
\end{align*}
\end{proof}
\medskip 

\noindent In general, we have the following. 

\medskip 
\begin{lemma}
For the hook generator $A_{ij} \in P_m$ ($1 \leq i<j \leq m$), we have 
\begin{align*}
\Phi(A_{ij})=[ \boldsymbol{e}_i-\boldsymbol{e}_j][ \boldsymbol{e}_i-\boldsymbol{e}_j]^T.
\end{align*}
\label{lem-Phi-Aij}
\end{lemma}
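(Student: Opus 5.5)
We argue by induction on $j-i$, with the base case $j=i+1$ given by Lemma \ref{lem-Aii}. For $j>i+1$ the defining word gives the recursion
\begin{align*}
A_{ij}=\sigma_{j-1}A_{i\,j-1}\sigma_{j-1}^{-1}.
\end{align*}
Here $A_{i\,j-1}$ is pure, but $\sigma_{j-1}$ is not, so Proposition \ref{prop-Phi-hom} cannot be applied to the factors directly. We will instead differentiate the triple product with Lemma \ref{lem-dt-Lei}. Write $S=f(\sigma_{j-1})$ and $Q=f(A_{i\,j-1})$. Then
\begin{align*}
\frac{d}{dt}\left(SQS^{-1}\right)=S'QS^{-1}+SQ'S^{-1}+SQ(S^{-1})'.
\end{align*}
At $t=1$ we have $Q=I_m$ by Proposition \ref{prop-perm-matrix}, and $S=P$, where $P$ is the permutation matrix of the transposition $(j-1\ j)$, so $P=P^{-1}=P^T$. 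Differentiating $SS^{-1}=I_m$ gives $(S^{-1})'=-S^{-1}S'S^{-1}$. Evaluating at $t=1$, the first and third terms become $S'P$ and $-PPS'P=-S'P$, so they cancel. Hence
\begin{align*}
\Phi(A_{ij})=P\,\Phi(A_{i\,j-1})\,P^{T}.
\end{align*}

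By the induction hypothesis, $\Phi(A_{i\,j-1})=[\boldsymbol{e}_i-\boldsymbol{e}_{j-1}][\boldsymbol{e}_i-\boldsymbol{e}_{j-1}]^T$. We therefore need $P(\boldsymbol{e}_i-\boldsymbol{e}_{j-1})$. Since $P$ swaps $\boldsymbol{e}_{j-1}$ and $\boldsymbol{e}_j$, and $i<j-1$ so $\boldsymbol{e}_i$ is fixed, we get $P(\boldsymbol{e}_i-\boldsymbol{e}_{j-1})=\boldsymbol{e}_i-\boldsymbol{e}_j$. This gives
\begin{align*}
\Phi(A_{ij})=[\boldsymbol{e}_i-\boldsymbol{e}_j][\boldsymbol{e}_i-\boldsymbol{e}_j]^T,
\end{align*}
which completes the induction.

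The step that most needs care is the cancellation of the outer terms. It relies only on $Q|_{t=1}=I_m$ and $S|_{t=1}^{2}=I_m$, and we should state explicitly that $f(\sigma_{j-1})|_{t=1}$ is the transposition matrix, as Proposition \ref{prop-perm-matrix} provides. The remaining check is the index bookkeeping: the recursion must peel off the outermost pair $\sigma_{j-1}\cdots\sigma_{j-1}^{-1}$ so that the inner word is exactly $A_{i\,j-1}$.
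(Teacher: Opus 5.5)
Your proof is correct and is essentially the paper's argument: the paper conjugates $\sigma_i^2$ by $q=\sigma_{j-1}\cdots\sigma_{i+1}$ all at once, obtaining $\Phi(A_{ij})=P\,\Phi(\sigma_i^2)P^{-1}$ by the same product-rule and inverse-derivative computation, and then tracks $P(\boldsymbol{e}_i-\boldsymbol{e}_{i+1})=\boldsymbol{e}_i-\boldsymbol{e}_j$ through $S_{j-1}\cdots S_{i+1}$, which is your induction unrolled. One small remark: the cancellation of the outer terms uses only $Q|_{t=1}=I_m$, because $S\,(S^{-1})'=-S'S^{-1}$ holds identically, so $P^2=I_m$ is not actually needed.
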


\begin{proof}
Let $A_{ij}=q \sigma_i^2q^{-1}$, where $q=\sigma_{j-1} \sigma_{j-2} \dots \sigma_{i+1}$. 
Note that $q$ is non-pure when $i+1<j$.  
Let $F=F(t)=f(q)$, $G=G(t)=f(\sigma_i^2)$. 
Then $f(A_{ij})=F(t)G(t)F(t)^{-1}$. 
Applying the product rule as in Lemma \ref{lem-dt-Lei}, we obtain 
\begin{align}
\frac{d}{dt}f(A_{ij})=\frac{d}{dt}(FGF^{-1})=\frac{dF}{dt}GF^{-1}+F\frac{dG}{dt}F^{-1}+FG\frac{dF^{-1}}{dt}. 
\label{formula-1}
\end{align}
Here, by differentiating both sides of $F(t)F(t)^{-1}=I_m$, we obtain the equality 
\begin{align}
\frac{d}{dt}(F(t)^{-1})=-F(t)^{-1}\left( \frac{d}{dt}F(t) \right) F(t)^{-1}.
\label{formula-2}
\end{align}
Set $P=F(1)=\left.f(q)\right|_{t=1}$. 
Note that $P$ is the permutation matrix of $q$ by Proposition \ref{prop-perm-matrix}. 
Since $\sigma_i^2 \in P_m$, we have $G(1)=\left.f(\sigma_i^2)\right|_{t=1}=I_m$. 
By (\ref{formula-1}) and (\ref{formula-2}), 
\begin{align*}
\Phi(A_{ij})=\left.\frac{d}{dt}f(A_{ij})\right|_{t=1}=\left.\frac{dF}{dt}\right|_{t=1}P^{-1}+P \left.\frac{dG}{dt}\right|_{t=1}P^{-1}-P\left( P^{-1}\left.\frac{dF}{dt}\right|_{t=1}P^{-1}\right). 
\end{align*}
Hence 
\begin{align}
\Phi(A_{ij})=P\Phi(\sigma_i^2)P^{-1}. 
\label{formula-3}
\end{align}

Next, we show that $P[ \boldsymbol{e}_i-\boldsymbol{e}_{i+1}]=[ \boldsymbol{e}_i-\boldsymbol{e}_{j}]$. 
The permutation matrix of $q=\sigma_{j-1}\sigma_{j-2}\dots \sigma_{i+1}$ is $P=S_{j-1}S_{j-2}\dots S_{i+1}$, where $S_k=\left. f(\sigma_k)\right|_{t=1}$ is the permutation matrix of the transposition $(k \ k+1)$. 
Note that $P\boldsymbol{e}_i=\boldsymbol{e}_i$. 
On the other hand, $P\boldsymbol{e}_{i+1}=\boldsymbol{e}_j$ because 
\begin{align*}
P \boldsymbol{e}_{i+1} & = S_{j-1}S_{j-2} \dots S_{i+2}S_{i+1}\boldsymbol{e}_{i+1} \\
& =  S_{j-1}S_{j-2} \dots S_{i+2}\boldsymbol{e}_{i+2} \\
& = \dots = S_{j-1}\boldsymbol{e}_{j-1} =\boldsymbol{e}_{j}.
\end{align*}
Hence, 
\begin{align}
P[ \boldsymbol{e}_i-\boldsymbol{e}_{i+1}]=[ \boldsymbol{e}_i-\boldsymbol{e}_{j}].
\label{formula-4}
\end{align}
Since the permutation matrix $P$ is an orthogonal matrix and hence satisfies $P^{-1}=P^T$, we have the following by (\ref{formula-3}), (\ref{formula-4}), and Lemma \ref{lem-Aii}. 
\begin{align*}
\Phi (A_{ij}) & = P \Phi(\sigma_i^2)P^{-1} \\
& = P[ \boldsymbol{e}_i-\boldsymbol{e}_{i+1}][ \boldsymbol{e}_i-\boldsymbol{e}_{i+1}]^TP^T \\
& = \left( P[ \boldsymbol{e}_i-\boldsymbol{e}_{i+1}] \right) \left( P[ \boldsymbol{e}_i-\boldsymbol{e}_{i+1}] \right)^T \\
& = [ \boldsymbol{e}_i-\boldsymbol{e}_j]  [ \boldsymbol{e}_i-\boldsymbol{e}_j]^T.
\end{align*}
\end{proof}

\begin{example}
Let $A_{24}=\sigma_3\sigma_2^2\sigma_3^{-1} \in P_5$. Then 
\begin{align*}
f(A_{24})= 
\begin{bmatrix}
1 & 0 & 0 & 0 & 0 \\
0 & 1 & 0 & 0 & 0 \\
0 & 0 & 1-t & 1 & 0 \\
0 & 0 & t & 0 & 0 \\
0 & 0 & 0 & 0 & 1 
\end{bmatrix}
\begin{bmatrix}
1 & 0 & 0 & 0 & 0 \\
0 & 1-t & 1 & 0 & 0 \\
0 & t & 0 & 0 & 0 \\
0 & 0 & 0 & 1 & 0 \\
0 & 0 & 0 & 0 & 1 
\end{bmatrix}^2 
\begin{bmatrix}
1 & 0 & 0 & 0 & 0 \\
0 & 1 & 0 & 0 & 0 \\
0 & 0 & 0 & t^{-1} & 0 \\
0 & 0 & 1 & 1-t^{-1} & 0 \\
0 & 0 & 0 & 0 & 1 
\end{bmatrix}=
\begin{bmatrix}
1 & 0 & 0 & 0 & 0 \\
0 & t^2-t+1 & 0 & -1+t^{-1} & 0 \\
0 & t^3-2t^2+t & 1 & -t+2-t^{-1} & 0 \\
0 & -t^3+t^2 & 0 & t & 0 \\
0 & 0 & 0 & 0 & 1 
\end{bmatrix},
\end{align*}
\begin{align*}
\frac{d}{dt}f(A_{24})=
\begin{bmatrix}
0 & 0 & 0 & 0 & 0 \\
0 & 2t-1 & 0 & -t^{-2} & 0 \\
0 & 3t^2-4t+1 & 0 & -1+t^{-2} & 0 \\
0 & -3t^2+2t & 0 & 1 & 0 \\
0 & 0 & 0 & 0 & 0 
\end{bmatrix}, \Phi(A_{24})=
\begin{bmatrix}
0 & 0 & 0 & 0 & 0 \\
0 & 1 & 0 & -1 & 0 \\
0 & 0 & 0 & 0 & 0 \\
0 & -1 & 0 & 1 & 0 \\
0 & 0 & 0 & 0 & 0 
\end{bmatrix}=
\begin{bmatrix}
0 \\
1 \\
0 \\
-1 \\
0
\end{bmatrix}
\begin{bmatrix}
0 \ 1 \ 0  -1 \ 0
\end{bmatrix}.
\end{align*}
\label{ex-Phi-A24}
\end{example}

\section{Crossing matrix}
\label{section-CM}

In this section, we review the crossing matrix of braids defined in \cite{Bu} (see also \cite{Gu}). 
Let $\beta$ be a diagram of a braid $b \in B_m$. 
We call a crossing of $\beta$ that corresponds to $\sigma_i$ (resp. $\sigma_i^{-1}$) for some $i$ a {\it positive crossing} (resp. {\it negative crossing}). 
The {\it crossing matrix of $\beta$}, denoted by $C(\beta)$, is an $m \times m$ matrix with zero diagonal such that the $(i,j)$ entry denotes the number of positive crossings minus the number of negative crossings at which the $i^{th}$ strand is over the $j^{th}$ strand in $\beta$. 
The matrix $C(\beta)$ is invariant under the braid relations and therefore depends only on $b$. 
Then the {\it crossing matrix $C(b)$ of a braid $b\in B_m$} is defined as $C(b)=C(\beta)$ for any diagram $\beta$ of $b$. 

\medskip 
\begin{example}
For $b=\sigma_1 \sigma_2^{-1} \in B_3$, we have 
\begin{align*}
C(b)=
\begin{bmatrix}
0 & 0 & -1 \\
1 & 0 & 0 \\
0 & 0 & 0
\end{bmatrix}.
\end{align*}
\end{example}
\medskip 

\noindent Let $M_m( \mathbb{Z})$ denote the set of $m \times m$ integer matrices. 
Note that $M_m(\mathbb{Z})$ is a group under addition. 
We define a map $C:B_m \to M_m( \mathbb{Z})$ by $b \mapsto C(b)$. 
For pure braids, the following is shown in \cite{Bu}. 

\medskip 
\begin{lemma}[\cite{Bu}]
Let $b, c \in P_m$. 
\begin{itemize}
\item[(1)] The crossing matrix $C(b)$ is symmetric. 
\item[(2)] We have $C(bc)=C(b)+C(c)$. 
\end{itemize}
\label{lem-CM-properties}
\end{lemma}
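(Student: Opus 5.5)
We must prove Lemma \ref{lem-CM-properties}: for pure braids the crossing matrix is symmetric and additive. Both parts come from a direct diagrammatic argument, with no need for the hook generators.

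\textbf{Part (2).} For any braids $b,c \in B_m$, a diagram of $bc$ is obtained by stacking a diagram $\beta$ of $b$ above a diagram $\gamma$ of $c$. Every crossing of the stacked diagram lies in exactly one of the two pieces, with the same sign in either reading. In the $\beta$ part, the strand that is the $i^{th}$ strand of $bc$ is the $i^{th}$ strand of $b$. In the $\gamma$ part it is the $\pi_b(i)^{th}$ strand of $c$, where $\pi_b$ is the braid permutation of $b$. Counting signed crossings therefore gives, in general,
\begin{align*}
C(bc)_{ij} = C(b)_{ij} + C(c)_{\pi_b(i)\,\pi_b(j)} .
\end{align*}
When $b \in P_m$ we have $\pi_b=\mathrm{id}$, so $C(bc)=C(b)+C(c)$. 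This uses only that $C$ is well defined on braids, which was stated above.

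\textbf{Part (1).} Fix $i\neq j$ and consider only strands $i$ and $j$, deleting all others. Since deleting strands from a braid diagram gives a valid $2$-braid diagram, and each crossing between strands $i$ and $j$ survives with its sign and over/under data, the pair of entries $(C(b)_{ij}, C(b)_{ji})$ can be read from a $2$-strand diagram. Along this $2$-braid, record at each height which strand is on the left. A crossing $\sigma_1^{\pm1}$ swaps them.

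Convention check: for $\sigma_1$ the left-to-right strand passes over (compare $C(\sigma_1\sigma_2^{-1})_{21}=1$, where strand $2$ is over strand $1$ and moves left). So at a positive crossing the strand that is currently on the right goes over. At a negative crossing the strand currently on the left goes over.

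Define a state $s\in\{0,1\}$ by $s=0$ if strand $i$ is on the left. For the crossings in order, the contribution of the $k$-th crossing to $C_{ij}-C_{ji}$ is $\pm\varepsilon_k$, with the sign determined by $s$ just before it. Here $\varepsilon_k\in\{\pm1\}$ is the crossing sign, and $s$ flips after each crossing. Explicitly, with $i$ on the left initially ($i<j$ without loss of generality), a crossing in state $s$ with sign $\varepsilon$ contributes $-\varepsilon(-1)^{s}$ to $C_{ij}-C_{ji}$. Since the states alternate, the total is $-\sum_k \varepsilon_k(-1)^{k-1}$.

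This alternating sum is not obviously zero; for example $\sigma_1\sigma_1^{-1}$ gives $-(1-(-1))\neq0$. That seemed wrong, so recheck $\sigma_1\sigma_1^{-1}$ directly:
\begin{itemize}
\item $\sigma_1$: $i$ starts on the left, and the right strand $j$ goes over, giving $C_{ji}\mathrel{+}=1$.
\item $\sigma_1^{-1}$: now $j$ is on the left, and the left strand $j$ goes over with negative sign, giving $C_{ji}\mathrel{-}=1$.
\end{itemize}
The total is $0$, so the earlier bookkeeping was off. The correct bookkeeping is that the over strand is determined by the state and the sign together. Writing $\delta_k=\varepsilon_k(-1)^{k-1}$, strand $j$ is over exactly when $\delta_k=+1$. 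Then
\begin{align*}
C_{ji}&=\sum_{k:\,\delta_k=1}\varepsilon_k, & C_{ij}&=\sum_{k:\,\delta_k=-1}\varepsilon_k .
\end{align*}

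\textbf{Main obstacle.} The key step is to show these two sums agree when the number of crossings $n$ is even, which is exactly the purity condition in the $2$-strand picture.

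I would handle this by induction on the reduced form in $B_2=\langle\sigma_1\rangle$, using invariance of $C$ under braid relations. The $2$-braid equals $\sigma_1^{e}$ with $e=\sum_k\varepsilon_k$ even. The point to verify is that $C$ of the original multi-strand diagram restricted to $i,j$ equals $C$ of $\sigma_1^{e}$. Cancelling an adjacent pair $\sigma_1^{\varepsilon}\sigma_1^{-\varepsilon}$ changes neither entry, as the computation above shows.

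For $\sigma_1^{2r}$ the crossings alternate over-strand, $r$ of them with $j$ over and $r$ with $i$ over, all of the same sign. This gives $C_{ij}=C_{ji}=\pm r$, which establishes symmetry.
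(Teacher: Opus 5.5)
Your proof is correct. The paper gives no proof of this lemma at all; it is quoted from the literature. So your argument is a self-contained replacement rather than a variant of the paper's route.

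\textbf{Part (2).} This is the standard stacking argument. It also yields the general twisted formula $C(bc)_{ij}=C(b)_{ij}+C(c)_{\pi_b(i)\pi_b(j)}$, which reduces to additivity when $b$ is pure.

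\textbf{Part (1).} Your final route is valid: restrict to strands $i,j$, freely reduce the resulting word in $B_2$ to $\sigma_1^{e}$ with $e$ even using braid invariance of $C$, then compute $\sigma_1^{2r}$ directly. Your explicit cancellation check covered only $\sigma_1\sigma_1^{-1}$ in state $s=0$. The remaining cases behave the same way: two cancelling crossings have opposite signs and opposite states, hence the same $\delta$. So the same strand is over at both, their contributions cancel, and later crossings keep their parity.

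However, the ``main obstacle'' is not really an obstacle, because your corrected bookkeeping already finishes the proof. From $C_{ji}=\sum_{\delta_k=1}\varepsilon_k$ and $C_{ij}=\sum_{\delta_k=-1}\varepsilon_k$ you get
\[
C_{ij}-C_{ji}=-\sum_{k}\delta_k\varepsilon_k=-\sum_{k=1}^{n}\varepsilon_k^{2}(-1)^{k-1}=-\sum_{k=1}^{n}(-1)^{k-1},
\]
which vanishes exactly when $n$ is even.

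Your first bookkeeping failed only because it carried an extra factor $\varepsilon$. The sign of a crossing decides both which strand is over and what value is added, and these two effects cancel. So each crossing contributes $-(-1)^{s}$ to $C_{ij}-C_{ji}$ regardless of its sign.

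This direct version has three advantages:
\begin{itemize}
\item It needs no appeal to invariance of $C$ and no induction.
\item Its vanishing for even $n$ does not depend on the over/under convention.
\item With the paper's convention, it gives the general fact that for any braid and $i<j$, $C_{ij}-C_{ji}$ equals $0$ or $-1$ according to whether $\pi(i)<\pi(j)$ or not. This is consistent with the example $C(\sigma_1\sigma_2^{-1})$.
\end{itemize}

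In a final write-up, delete the retracted computation. Also fix the phrase ``the left-to-right strand passes over'': it contradicts your correct conclusion that at a positive crossing the strand currently on the right goes over.
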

\medskip 

\noindent Now we define the Laplacian of a matrix. 

\medskip
\begin{definition}
Let $A=[a_{ij}]$ be an $m \times m$ symmetric matrix with zero diagonal. 
\begin{itemize}
\item[(1)] The following diagonal matrix $D_A \in M_m(\mathbb{Z})$ is called the {\it degree matrix of $A$}. 
\begin{align*}
D_A= 
\begin{bmatrix}
\sum_{j\neq 1}a_{1 j}  \\
 & \sum_{j\neq 2}a_{2 j}          &        & \text{\Huge{0}}   \\
 &                 & \ddots                     \\
  & \text{\Huge{0}} &        & \ddots            \\
&                 &        &           & \sum_{j\neq m}a_{m j}
\end{bmatrix}
\end{align*}
\item[(2)] The matrix $L_A=D_A-A \in M_m(\mathbb{Z})$ is called the {\it Laplacian of $A$}. 
\end{itemize}
\end{definition}
\medskip

\begin{example}
\begin{align*}
\text{\small When } A=
\begin{bmatrix}
0 & 2 & -1 \\
2 & 0 & 3 \\
-1 & 3 & 0
\end{bmatrix}, \text{ we have }
D_A=
\begin{bmatrix}
1 & 0 & 0 \\
0 & 5 & 0 \\
0 & 0 & 2
\end{bmatrix}, \ 
L_A= D_A -A =
\begin{bmatrix}
1 & -2 & 1 \\
-2 & 5 & -3 \\
1 & -3 & 2
\end{bmatrix}. 
\end{align*}
\end{example}
\medskip 

\noindent The following proposition shows that the map $L:P_m \to M_m( \mathbb{Z})$ with $b \mapsto L_{C(b)}$ is a group homomorphism. 

\medskip 
\begin{proposition}
For any $b, c \in P_m$, we have $L_{C(bc)}=L_{C(b)}+L_{C(c)}$. 
\label{prop-LC-hom}
\end{proposition}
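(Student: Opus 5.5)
The plan is to prove the statement by showing that the Laplacian construction is additive, and then using the additivity of the crossing matrix from Lemma \ref{lem-CM-properties}.

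First, for $b\in P_m$, Lemma \ref{lem-CM-properties}(1) says that $C(b)$ is symmetric. By definition of the crossing matrix it also has zero diagonal, so $L_{C(b)}$ is well defined. The same applies to $C(c)$ and to $C(bc)$, since $bc\in P_m$.

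Second, I check that the map $A\mapsto L_A$ is additive on the set of symmetric matrices with zero diagonal. Let $A=[a_{ij}]$ and $B=[b_{ij}]$ be two such matrices. Then $A+B$ is again symmetric with zero diagonal. Its degree matrix has $k$-th diagonal entry
\begin{align*}
\sum_{j\neq k}(a_{kj}+b_{kj})=\sum_{j\neq k}a_{kj}+\sum_{j\neq k}b_{kj},
\end{align*}
so $D_{A+B}=D_A+D_B$. It follows that
\begin{align*}
L_{A+B}=D_{A+B}-(A+B)=(D_A-A)+(D_B-B)=L_A+L_B.
\end{align*}

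Third, Lemma \ref{lem-CM-properties}(2) gives $C(bc)=C(b)+C(c)$. Applying the additivity from the second step with $A=C(b)$ and $B=C(c)$ yields $L_{C(bc)}=L_{C(b)}+L_{C(c)}$.

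There is no real obstacle here. The only point needing care is that the Laplacian was defined only for symmetric matrices with zero diagonal, so purity of $b$, $c$ and $bc$ must be invoked to make each $L_{C(\cdot)}$ meaningful. The additivity itself is a linearity check on the entries.
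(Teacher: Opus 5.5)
Your proof is correct and follows the paper's argument: use $C(bc)=C(b)+C(c)$ from Lemma~\ref{lem-CM-properties}(2), observe that $D_{A+B}=D_A+D_B$, and conclude that $L_{A+B}=L_A+L_B$. Your remark that purity (via part (1)) is needed so that each $L_{C(\cdot)}$ is defined is a small addition that the paper leaves implicit.
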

\medskip 

\begin{proof}
By Lemma \ref{lem-CM-properties} (2), we have $C(bc)=C(b)+C(c)$ for any $b, c \in P_m$. 
For the degree matrix, we have $D_{C(bc)}=D_{C(b)+C(c)}=D_{C(b)}+D_{C(c)}$. 
Hence, 
\begin{align*}
L_{C(bc)} & = D_{C(bc)}-C(bc) \\
& =( D_{C(b)}+D_{C(c)})-(C(b)+C(c)) \\
& =(D_{C(b)}-C(b))+(D_{C(c)}-C(c)) \\
& = L_{C(b)}+L_{C(c)}.
\end{align*}
\end{proof}

\begin{corollary}
For any $b \in P_m$, we have $L_{C(b^{-1})}=-L_{C(b)}$. 
\label{cor-LC-1}
\end{corollary}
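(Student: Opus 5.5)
This corollary follows from Proposition \ref{prop-LC-hom} in exactly the way Corollary \ref{cor-Phi-1} followed from Proposition \ref{prop-Phi-hom}. The plan is to apply the homomorphism property to the product $b b^{-1}$ and compare with the identity braid.

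First, note that $b^{-1} \in P_m$ whenever $b \in P_m$, since $P_m$ is a subgroup of $B_m$. So Proposition \ref{prop-LC-hom} applies to the pair $(b, b^{-1})$ and gives
\begin{align*}
L_{C(bb^{-1})} = L_{C(b)} + L_{C(b^{-1})}.
\end{align*}

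Second, $bb^{-1}$ is the trivial braid. Its diagram with no crossings shows that $C(\mathrm{id}) = O$. The degree matrix of the zero matrix is $O$, so $L_{C(\mathrm{id})} = O - O = O$. Combining the two displays yields $L_{C(b)} + L_{C(b^{-1})} = O$, which is the claim.

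There is no real obstacle. The only point to check is that $C(\mathrm{id}) = O$, and this uses the invariance of $C$ under the braid relations, stated in Section \ref{section-CM}: we may compute $C(bb^{-1})$ from the empty diagram rather than from the concatenated diagram of $b$ and $b^{-1}$. Alternatively, one can avoid this check. Lemma \ref{lem-CM-properties} (2) gives $C(\mathrm{id}) = C(\mathrm{id}\cdot \mathrm{id}) = 2C(\mathrm{id})$, which forces $C(\mathrm{id}) = O$, and then the argument above goes through as before.
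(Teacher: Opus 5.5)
Your proof is correct and is the same argument as the paper's: apply Proposition \ref{prop-LC-hom} to $bb^{-1}$ and use $L_{C(\mathrm{id})}=O$. Your justification that $C(\mathrm{id})=O$, either from the empty diagram or from $C(\mathrm{id})=2C(\mathrm{id})$, supplies a step the paper leaves implicit.
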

\medskip 

\begin{proof}
By Proposition \ref{prop-LC-hom}, $L_{C(bb^{-1})}=L_{C(b)}+L_{C(b^{-1})}=L_{C(id)}=O$. 
Hence, $L_{C(b^{-1})}=-L_{C(b)}$. 
\end{proof}

\medskip 
\begin{lemma}
For the hook generator $A_{ij} \in P_m$ ($1 \leq i<j \leq m$), we have 
\begin{align*}
L_{C(A_{ij})}=  [ \boldsymbol{e}_i-\boldsymbol{e}_j]  [ \boldsymbol{e}_i-\boldsymbol{e}_j]^T.
\end{align*}
\label{lem-CM-e}
\end{lemma}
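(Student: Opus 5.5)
The plan is to compute the crossing matrix $C(A_{ij})$ directly from the diagram $\sigma_{j-1}\sigma_{j-2}\cdots\sigma_{i+1}\sigma_i^2\sigma_{i+1}^{-1}\cdots\sigma_{j-1}^{-1}$. Then we apply the definition of the Laplacian. The claim we aim for is
\begin{align*}
C(A_{ij})=\boldsymbol{e}_i\boldsymbol{e}_j^T+\boldsymbol{e}_j\boldsymbol{e}_i^T .
\end{align*}
In words, the only nonzero entries are $c_{ij}=c_{ji}=1$. Given this, the degree matrix is $D_{C(A_{ij})}=\boldsymbol{e}_i\boldsymbol{e}_i^T+\boldsymbol{e}_j\boldsymbol{e}_j^T$. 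Subtracting $C(A_{ij})$ gives exactly $[\boldsymbol{e}_i-\boldsymbol{e}_j][\boldsymbol{e}_i-\boldsymbol{e}_j]^T$.

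To compute $C(A_{ij})$, we track which strands meet at each crossing. Reading the word from the top, $\sigma_{j-1},\dots,\sigma_{i+1}$ move the $j^{th}$ strand leftward from position $j$ to position $i+1$. At the crossing $\sigma_k$ ($k=j-1,\dots,i+1$) it passes the $k^{th}$ strand, which shifts from position $k$ to $k+1$.

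The two crossings of $\sigma_i^2$ involve the $i^{th}$ and $j^{th}$ strands. The first is positive with one strand over, and after the swap the second is positive with the other strand over. So they contribute $+1$ to both $c_{ij}$ and $c_{ji}$; here we use the convention fixed by Figure \ref{fig-s} for which strand is over in $\sigma_i$. After $\sigma_i^2$ the positions are restored, and $\sigma_{i+1}^{-1},\dots,\sigma_{j-1}^{-1}$ move the $j^{th}$ strand back. The crossing $\sigma_k^{-1}$ again involves the $j^{th}$ strand and the $k^{th}$ strand.

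The main point to verify is the cancellation between the conjugating crossings. At $\sigma_k$ and at $\sigma_k^{-1}$, the same ordered pair of strands sits in positions $(k,k+1)$ in the same order, namely the $k^{th}$ strand on the left and the $j^{th}$ strand on the right. The crossing $\sigma_k^{-1}$ has the opposite over/under convention from $\sigma_k$. Because the left/right order is the same both times, the same strand is over in both crossings. Their signs are opposite, so the contributions to the relevant entry ($c_{kj}$ or $c_{jk}$) cancel.

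This bookkeeping of positions and over-strands is the only delicate step. I would check it concretely against the diagram conventions, and also sanity-check it against Example \ref{ex-Phi-A24} for $A_{24}$ via Lemma \ref{lem-Phi-Aij}. Alternatively, the same cancellation can be phrased as $C(qwq^{-1})$ for a pure $w$: the crossings of $q$ and $q^{-1}$ are mirror pairs between the same strands with the same over-strand.
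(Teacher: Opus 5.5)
Your overall plan is the paper's: count $C(A_{ij})$ directly from the hook diagram, show that only $c_{ij}=c_{ji}=1$ survive, then take the Laplacian. Your $\sigma_i^2$ step and the Laplacian algebra are correct. The problem is the justification of the cancellation, which is the step you flagged as delicate.

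The positional claim is false. After $\sigma_{i+1}^{-1},\dots,\sigma_{k-1}^{-1}$, the $j^{th}$ strand has moved back only to position $k$, while the $k^{th}$ strand is still at position $k+1$. So at $\sigma_k^{-1}$ the $j^{th}$ strand is on the \emph{left}, whereas at $\sigma_k$ it was on the right. For example, in $A_{13}=\sigma_2\sigma_1^2\sigma_2^{-1}$, positions $2,3$ carry strands $2,3$ at $\sigma_2$ but strands $3,2$ at $\sigma_2^{-1}$. Your inference also does not follow from your own premises. If the order really were the same and $\sigma_k^{-1}$ used the opposite over/under rule, then \emph{different} strands would be over at the two crossings. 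That would put $+1$ into one of $c_{jk}, c_{kj}$ and $-1$ into the other, so nothing would cancel and $C(A_{ij})$ would not even be symmetric.

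The correct bookkeeping is that the left/right order is reversed \emph{and} the over/under rule is reversed, so the same strand is over at $\sigma_k$ and at $\sigma_k^{-1}$, with opposite signs. Those two contributions cancel. The paper's convention can be read off from $C(\sigma_1\sigma_2^{-1})$: in $\sigma_k$ the strand entering from position $k+1$ is over, and in $\sigma_k^{-1}$ the strand entering from position $k$ is over. Under that convention the common over-strand is the $j^{th}$ strand, exactly as the paper states. This is the same reasoning you applied correctly to $\sigma_i^2$ (same rule, swapped order, hence the other strand is over).

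Your alternative mirror-pair formulation is correct and is the cleaner way to say this. The diagram of $q^{-1}$ is the top-to-bottom reflection of the diagram of $q$. Because $\sigma_i^2$ is pure, the strand entering $q^{-1}$ at position $p$ is the strand leaving $q$ at position $p$. Hence mirrored crossings join the same two strands, with the same over-strand and opposite signs.

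With either correction, the proof is complete and agrees with the paper's.
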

\medskip

\begin{example}
When $b=A_{24} \in P_5$, 
\begin{align*}
C(b)=
\begin{bmatrix}
0 & 0 & 0 & 0 & 0 \\
0 & 0 & 0 & 1 & 0 \\
0 & 0 & 0 & 0 & 0 \\
0 & 1 & 0 & 0 & 0 \\
0 & 0 & 0 & 0 & 0 
\end{bmatrix}, \ D_{C(b)}=
\begin{bmatrix}
0 & 0 & 0 & 0 & 0 \\
0 & 1 & 0 & 0 & 0 \\
0 & 0 & 0 & 0 & 0 \\
0 & 0 & 0 & 1 & 0 \\
0 & 0 & 0 & 0 & 0 
\end{bmatrix}, \text{ and }  L_{C(b)}=
\begin{bmatrix}
0 & 0 & 0 & 0 & 0 \\
0 & 1 & 0 & -1 & 0 \\
0 & 0 & 0 & 0 & 0 \\
0 & -1 & 0 & 1 & 0 \\
0 & 0 & 0 & 0 & 0 
\end{bmatrix} = 
\begin{bmatrix}
0 \\
1 \\
0 \\
-1 \\
0
\end{bmatrix} 
\begin{bmatrix}
0 \ 1 \ 0 -1 \ 0
\end{bmatrix}.
\end{align*}
\end{example}
\medskip 

\begin{proof}[Proof of Lemma \ref{lem-CM-e}.] 
In the diagram of $A_{ij}$ depicted in Figure \ref{fig-hook}, the $i^{th}$ and $j^{th}$ strands have two positive crossings at which the $i^{th}$ strand is over at one crossing and the $j^{th}$ strand is over at the other crossing. 
The $k^{th}$ strand ($i+1 \leq k \leq j-1$) and the $j^{th}$ strand have two crossings of opposite signs at which the $j^{th}$ strand is over. 
Other pairs have no mutual crossings. 
Hence, the crossing matrix $C(A_{ij})$ has $1$ at the $(i,j)$ and $(j,i)$ entries, and has 0 for other entries. 
Then, $D_{C(A_{ij})}$ has 1 at the $(i,i)$ and $(j,j)$ entries. 
Therefore, $L_{C(A_{ij})}$ has $-1$ at the $(i,j)$ and $(j,i)$ entries, has $1$ at the $(i,i)$, $(j,j)$ entries, and has 0 for other entries. 
This implies $L_{C(A_{ij})} = [ \boldsymbol{e}_i-\boldsymbol{e}_j] [ \boldsymbol{e}_i-\boldsymbol{e}_j]^T$. 
\end{proof}
\medskip

\section{Proof of the main theorem}
\label{section-pf}

In this section, we prove Theorem \ref{thm-main}. 
It suffices to prove the following lemma for the hook generator.  

\medskip 
\begin{lemma}
For each hook generator $A_{ij} \in P_m$ ($1 \leq i<j \leq m$), we have $\Phi (A_{ij}) =L_{C(A_{ij})}$. 
\label{lem-Phi-LC}
\end{lemma}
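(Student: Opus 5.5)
This lemma follows at once from two results already established, Lemma \ref{lem-Phi-Aij} and Lemma \ref{lem-CM-e}. No new computation is needed: both sides have already been identified with the same explicit rank-one matrix.

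Fix $1 \leq i<j \leq m$. By Lemma \ref{lem-Phi-Aij}, which was proved by writing $A_{ij}=q\sigma_i^2q^{-1}$ and conjugating $\Phi(\sigma_i^2)$ by the permutation matrix of $q$, we have
\begin{align*}
\Phi(A_{ij})=[\boldsymbol{e}_i-\boldsymbol{e}_j][\boldsymbol{e}_i-\boldsymbol{e}_j]^T .
\end{align*}
By Lemma \ref{lem-CM-e}, which was proved by reading off the crossings in the diagram of Figure \ref{fig-hook}, we have
\begin{align*}
L_{C(A_{ij})}=[\boldsymbol{e}_i-\boldsymbol{e}_j][\boldsymbol{e}_i-\boldsymbol{e}_j]^T .
\end{align*}
The right-hand sides coincide, so $\Phi(A_{ij})=L_{C(A_{ij})}$.

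There is no real obstacle at this step; the substantive work was done in the two cited lemmas. The point most worth double-checking is that both lemmas use the same hook-generator word $A_{ij}=\sigma_{j-1}\cdots\sigma_{i+1}\sigma_i^2\sigma_{i+1}^{-1}\cdots\sigma_{j-1}^{-1}$ and the same convention for strand labels, so that the vector $\boldsymbol{e}_i-\boldsymbol{e}_j$ refers to the same pair of strands on both sides. Example \ref{ex-Phi-A24} and the example following Lemma \ref{lem-CM-e} give a consistency check for $A_{24}\in P_5$: both produce the same matrix.

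With this lemma in hand, Theorem \ref{thm-main} follows. By Proposition \ref{prop-Phi-hom} and Proposition \ref{prop-LC-hom}, both $\Phi$ and $b\mapsto L_{C(b)}$ are homomorphisms $P_m\to M_m(\mathbb{Z})$. By this lemma they agree on the generating set $\{A_{ij}\}$ of $P_m$. By Corollaries \ref{cor-Phi-1} and \ref{cor-LC-1} they also agree on inverses of the generators, hence on every word in the $A_{ij}$, which is every pure braid.
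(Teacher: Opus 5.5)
Your proof is correct and is essentially the paper's own one-line argument: Lemmas \ref{lem-Phi-Aij} and \ref{lem-CM-e} both give $[\boldsymbol{e}_i-\boldsymbol{e}_j][\boldsymbol{e}_i-\boldsymbol{e}_j]^T$, so the two sides are equal. Your closing deduction of Theorem \ref{thm-main} also matches the paper's: it uses that both maps are homomorphisms, handles inverses, and invokes generation of $P_m$ by the $A_{ij}$.
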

\medskip 

\begin{proof}
By Lemmas \ref{lem-CM-e} and \ref{lem-Phi-Aij}, we obtain $\Phi(A_{ij})=L_{C(A_{ij})}= [ \boldsymbol{e}_i-\boldsymbol{e}_j]  [ \boldsymbol{e}_i-\boldsymbol{e}_j]^T$. 
\end{proof}
\medskip

\noindent Now we prove Theorem \ref{thm-main}. 

\medskip 
\begin{proof}[Proof of Theorem \ref{thm-main}.]
Let $b=A_{i_1j_1}^{\varepsilon_1}A_{i_2j_2}^{\varepsilon_2} \dots A_{i_rj_r}^{\varepsilon_r} \in P_m$ ($1 \leq i_k < j_k \leq m$, $\varepsilon_k \in \{ \pm 1\}$). 
By Proposition \ref{prop-Phi-hom} and Corollary \ref{cor-Phi-1}, we have 
$$\Phi(b)= \sum_{k=1}^r \varepsilon_k \Phi(A_{i_kj_k}).$$
By Proposition \ref{prop-LC-hom} and Corollary \ref{cor-LC-1}, we have 
$$L_{C(b)}= \sum_{k=1}^r \varepsilon_k L_{C(A_{i_kj_k})}.$$
By Lemma \ref{lem-Phi-LC}, 
$$\Phi(b)=\sum_{k=1}^r \varepsilon_k \Phi(A_{i_kj_k})=\sum_{k=1}^r \varepsilon_k L_{C(A_{i_kj_k})}=L_{C(b)}.$$
Therefore, $\Phi(b)=L_{C(b)}$ holds. 
\end{proof}
\medskip 

\begin{proof}[Proof of Corollary \ref{cor-cij}.]
It follows immediately from Theorem \ref{thm-main}. 
Note that the crossing matrix $C(b)$ is recovered from its Laplacian $L_{C(b)}$ by replacing each diagonal entry with zero and multiplying the remaining entries by $-1$. 
\end{proof}
\medskip 

\begin{proof}[Proof of Corollary \ref{cor-symmetric}.]
It follows from Theorem \ref{thm-main} and Lemma \ref{lem-CM-properties} (1). 
Note that if $C(b)$ is symmetric, then $L_{C(b)}$ is also symmetric. 
\end{proof}
\medskip

\section{Conjugacy invariants}
\label{section-conjugacy}

Two braids $b, b' \in B_m$ are {\it conjugate} if $b'=c^{-1}bc$ for some $c \in B_m$. 
As we have seen, Corollary \ref{cor-cij} shows that the crossing matrix of a pure braid can be recovered from the Burau matrix. 
In this section, we compare methods based on the Burau matrix and the crossing matrix for proving that two pure braids are not conjugate.

\subsection{Conjugacy invariants derived from the Burau representation}
\label{subsection-C-BR}

The Burau representation gives several useful conjugacy invariants\footnote{
As observed in \cite{JSB}, the characteristic polynomial of the reduced Burau matrix does not determine completely the conjugacy class of $b \in B_n$ (Corollary 3.11.2 in \cite{JSB}). 
}
that are derived from the equation $f(c^{-1}bc)=f(c)^{-1}f(b)f(c)$. 
Two matrices $A, A' \in M_m(\Lambda)$ are {\it similar} if $A'=P^{-1}AP$ for some $P\in GL_m(\Lambda)$. 
Then we have the following. 

\medskip 
\begin{proposition}
If $b$ and $b' \in B_m$ are conjugate, then $f(b)$ and $f(b')$ are similar. 
\end{proposition}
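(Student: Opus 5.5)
The plan is to use the fact that $f$ is a group homomorphism $B_m \to GL_m(\Lambda)$. Suppose $b$ and $b'$ are conjugate in $B_m$, so that $b' = c^{-1} b c$ for some $c \in B_m$. Set $P = f(c)$.

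First, $P$ lies in $GL_m(\Lambda)$. Since $c^{-1} \in B_m$ and $f$ is a homomorphism,
\begin{align*}
f(c)f(c^{-1}) = f(cc^{-1}) = f(\mathrm{id}) = I_m,
\end{align*}
and likewise $f(c^{-1})f(c) = I_m$. Hence $P$ is invertible over $\Lambda$ with $P^{-1} = f(c^{-1})$. This can also be seen directly on generators: the matrices $f(\sigma_i)$ and $f(\sigma_i^{-1})$ written in Section \ref{section-BR} are mutually inverse and have Laurent polynomial entries, so any product of them is invertible over $\Lambda$.

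Next, applying the homomorphism property to the product $c^{-1} b c$ gives
\begin{align*}
f(b') = f(c^{-1})\, f(b)\, f(c) = P^{-1} f(b) P.
\end{align*}
This is exactly the definition of $f(b)$ and $f(b')$ being similar.

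There is no real obstacle here. The only point that needs care is that the definition of similarity requires the conjugating matrix to be invertible over $\Lambda$, not merely over the field of rational functions $\mathbb{Q}(t)$. The first step ensures this, because $P^{-1} = f(c^{-1})$ again has entries in $\Lambda$.
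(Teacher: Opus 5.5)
Your proof is correct and is essentially the paper's own argument: the paper gives no separate proof and derives the proposition directly from the homomorphism identity $f(c^{-1}bc)=f(c)^{-1}f(b)f(c)$, taking $P=f(c)\in GL_m(\Lambda)$. Your extra check that $P^{-1}=f(c^{-1})$ has entries in $\Lambda$ is harmless, but it is already built into the fact that $f$ is a homomorphism into $GL_m(\Lambda)$.
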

\medskip 

\noindent In general, determining whether two matrices are similar is not easy.  Moreover, the image of the Burau representation is not completely understood\footnote{
As for the crossing matrix, crossing matrices of braids, pure braids are completely characterized in \cite{Bu}. 
The characterization for positive pure braids remains an open problem. 
It was determined up to degree six by Y. Ozawa and the authors (see \cite{YAY, AY-5}). 
}. 
The following corollary is useful in practice. 

\medskip 
\begin{corollary}
If $b$ and $b' \in B_m$ are conjugate, then $f(b)$ and $f(b')$ have the same trace, determinant, and characteristic polynomial. 
\end{corollary}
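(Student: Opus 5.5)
The corollary follows from the preceding proposition by invariance of trace, determinant and characteristic polynomial under similarity. One point needs care: the similarity is over the ring $\Lambda$ and not over a field. If $b$ and $b'$ are conjugate, write $b'=c^{-1}bc$ with $c\in B_m$. Since $f$ is a homomorphism, $f(b')=f(c)^{-1}f(b)f(c)$ with $P=f(c)\in GL_m(\Lambda)$. So I will prove the general statement that if $A'=P^{-1}AP$ with $A,A'\in M_m(\Lambda)$ and $P\in GL_m(\Lambda)$, then $A$ and $A'$ have the same trace, determinant and characteristic polynomial.

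For the determinant, multiplicativity of $\det$ holds over any commutative ring such as $\Lambda$. Hence
\begin{align*}
\det(A')=\det(P^{-1})\det(A)\det(P)=\det(A)\det(P^{-1}P)=\det(A).
\end{align*}
Here $\det(P^{-1})\det(P)=\det(I_m)=1$.

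For the trace, I will use the identity $\operatorname{tr}(XY)=\operatorname{tr}(YX)$ for $X,Y\in M_m(\Lambda)$. It holds because both sides equal $\sum_{k,l}x_{kl}y_{lk}$, using commutativity of $\Lambda$. Taking $X=P^{-1}A$ and $Y=P$ gives $\operatorname{tr}(A')=\operatorname{tr}(AP P^{-1})=\operatorname{tr}(A)$.

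For the characteristic polynomial, I will work in the polynomial ring $\Lambda[x]$, which is still commutative. There $xI_m-A'=P^{-1}(xI_m-A)P$, since $P^{-1}(xI_m)P=xI_m$. Applying the determinant argument over $\Lambda[x]$ then gives $\det(xI_m-A')=\det(xI_m-A)$.

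I do not expect a real obstacle. The only point needing attention is that all the standard identities used are valid over a commutative ring rather than a field: multiplicativity of the determinant and the cyclicity of the trace. Both hold over $\Lambda$ and $\Lambda[x]$.
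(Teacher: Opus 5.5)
Your proposal is correct and takes the same approach as the paper. The paper states this corollary without further proof, as an immediate consequence of the preceding proposition: $f(b')=f(c)^{-1}f(b)f(c)$ is similar to $f(b)$ over $\Lambda$, and similarity preserves trace, determinant and characteristic polynomial. Your check that multiplicativity of $\det$ and cyclicity of the trace hold over the commutative rings $\Lambda$ and $\Lambda[x]$ is exactly what makes that step rigorous.

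One cosmetic slip: with $X=P^{-1}A$ and $Y=P$ you get $YX=PP^{-1}A$, not $APP^{-1}$, though both equal $A$.
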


\subsection{Conjugacy invariants derived from the crossing matrix}
\label{subsection-C-CM}

Two matrices $A, A' \in M_m(\mathbb{Z})$ are {\it permutation equivalent} if there exists a permutation matrix $T$ such that $A'=TAT^T$. 
For pure braids, we have the following. 

\medskip 
\begin{proposition}[\cite{AS}]
If $b$ and $b' \in P_m$ are conjugate, then $C(b)$ and $C(b')$ are permutation equivalent. 
\end{proposition}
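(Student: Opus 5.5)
The proposition is attributed to \cite{AS}. I would prove it directly by showing that conjugation of a pure braid acts on the crossing matrix by relabelling strands according to the permutation of the conjugator. Write $b'=c^{-1}bc$ with $c\in B_m$, and let $\pi$ be the braid permutation of $c$, with permutation matrix $T_0$. Conjugacy is a symmetric relation and $T^T$ is again a permutation matrix, so it is enough to produce some permutation matrix $T$ with $C(b')=TC(b)T^T$.

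The key step is to count crossings in the concatenated diagram $\gamma^{-1}\beta\gamma$, where $\beta,\gamma$ are diagrams of $b,c$ and $\gamma^{-1}$ is the mirror-reversed diagram of $c^{-1}$. Each strand of $b'$ is built from three pieces: a strand of $\gamma^{-1}$, then a strand of $\beta$, then a strand of $\gamma$. Since $b$ is pure, the $k$th strand of $b'$ runs through $\gamma^{-1}$ from position $k$ to position $\pi^{-1}(k)$, then through $\beta$ as the $\pi^{-1}(k)$th strand of $b$, and finally through $\gamma$ back to position $k$.

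Next I would treat the crossings in the three blocks separately.
\begin{itemize}
\item In the middle block, strands $k$ and $l$ of $b'$ contribute $c_{\pi^{-1}(k)\pi^{-1}(l)}(b)$ to the $(k,l)$ entry.
\item A crossing of $\gamma$ at which the piece of strand $k$ passes over the piece of strand $l$ with sign $\varepsilon$ has a mirror crossing in $\gamma^{-1}$. There the same two strands of $b'$ meet, again with $k$ over $l$, but with sign $-\varepsilon$.
\end{itemize}
So the contributions of $\gamma^{-1}$ and $\gamma$ cancel entry by entry.

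This gives $C(b')_{kl}=C(b)_{\pi^{-1}(k)\pi^{-1}(l)}$, which is $C(b')=T_0^{T}C(b)T_0$ up to whether one uses $T_0$ or $T_0^T$ under the paper's convention $p_{ij}=1$ iff $j=\pi(i)$. Setting $T=T_0^T$ finishes the proof.

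The main obstacle is the cancellation claim. One must check that when a crossing of $\gamma$ is reflected to $\gamma^{-1}$, the over-strand stays over and the same pair of strands of $b'$ is involved. Since $\gamma^{-1}$ is the vertical mirror of $\gamma$ with over/under kept and sign flipped, and the pieces in $\gamma^{-1}$ and $\gamma$ belonging to one strand of $b'$ are mirror images of each other (using purity of $b$), this check reduces to bookkeeping. Alternatively, one can verify the conjugation formula on a single generator $c=\sigma_i^{\pm1}$ and then induct on word length, because any $c$ is a product of generators.
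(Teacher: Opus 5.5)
Your proof is correct. The paper gives no proof of this proposition of its own; it is simply quoted from \cite{AS}. So there is no in-paper argument to compare against, and yours is a complete, self-contained one.

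The two points that carry your argument are both handled properly.
\begin{itemize}
\item The top--bottom mirror gives a bijection between the crossings of $\gamma$ and those of $\gamma^{-1}$. This bijection keeps the over-strand and reverses the sign.
\item Purity of $b$ is exactly what ensures that the piece of $\gamma$ ending at position $k$ and the piece of $\gamma^{-1}$ starting at position $k$ belong to the same strand $k$ of $b'$. Without purity they would belong to strands $\pi_{b'}^{-1}(k)$ and $k$, and the cancellation would fail.
\end{itemize}

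You can drop your hedge about $T_0$ versus $T_0^T$. With $p_{ij}=1$ iff $j=\pi(i)$, one has $(T_0^{T}AT_0)_{kl}=A_{\pi^{-1}(k)\pi^{-1}(l)}$. Hence $C(c^{-1}bc)=T_0^{T}C(b)T_0$ exactly, and $T=T_0^{T}$ is the permutation matrix of $\pi^{-1}$.

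For comparison, the same crossing count can be packaged algebraically as the rule $C(xy)=C(x)+P_xC(y)P_x^{T}$ for arbitrary $x,y\in B_m$, where $P_x$ is the permutation matrix of $x$ in the paper's convention.
\begin{itemize}
\item Applying it to $c^{-1}c=1$ gives $C(c^{-1})=-P_{c^{-1}}C(c)P_{c^{-1}}^{T}$.
\item Applying it to $c^{-1}\cdot b\cdot c$ with $P_b=I_m$ then gives $C(c^{-1}bc)=P_{c^{-1}}C(b)P_{c^{-1}}^{T}$, which is the same formula as yours.
\end{itemize}
That version makes your alternative induction on the word length of $c$ automatic. It also parallels the conjugation formula $\Phi(A_{ij})=P\Phi(\sigma_i^2)P^{-1}$ used in the proof of Lemma~\ref{lem-Phi-Aij}. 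Your mirror-crossing argument, by contrast, gets the result directly without first setting up the general product rule for non-pure braids.
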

\medskip 

\noindent Note that permutation equivalence can be checked by examining at most $m!$ permutations. 
In \cite{AS, AY-H}, conjugacy invariants for pure braids are also obtained from the crossing matrix.

\begin{corollary}[\cite{AS, AY-H}]
If $b$ and $b' \in P_m$ are conjugate, then $C(b)$ and $C(b')$ have the same rank, determinant, and characteristic polynomial. 
They also have the same multiset of entries. 
\end{corollary}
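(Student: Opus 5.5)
The statement is the last corollary: if $b,b'\in P_m$ are conjugate, then $C(b)$ and $C(b')$ have the same rank, determinant, characteristic polynomial, and multiset of entries. I will deduce everything from the preceding proposition. It says that conjugate pure braids have permutation equivalent crossing matrices, i.e.\ $C(b')=TC(b)T^T$ for some permutation matrix $T$.

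First, I note that a permutation matrix is orthogonal, so $T^T=T^{-1}$. Hence $C(b')=TC(b)T^{-1}$, and $C(b)$ and $C(b')$ are similar over $\mathbb{Z}$ via an invertible matrix with $\det T=\pm1$. Then the three algebraic invariants follow at once:
\begin{align*}
\operatorname{rank} C(b') &= \operatorname{rank} C(b), \\
\det C(b') &= \det T\,\det C(b)\,\det T^{-1}=\det C(b), \\
\det(xI_m-C(b')) &= \det\bigl(T(xI_m-C(b))T^{-1}\bigr)=\det(xI_m-C(b)).
\end{align*}
The rank is preserved because multiplication by invertible matrices on either side does not change rank.

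Next, I treat the multiset of entries. Let $\tau$ be the permutation with $T\boldsymbol{e}_k=\boldsymbol{e}_{\tau(k)}$. Then the $(\tau(i),\tau(j))$ entry of $TC(b)T^T$ is $\boldsymbol{e}_{\tau(i)}^T T C(b) T^T \boldsymbol{e}_{\tau(j)} = \boldsymbol{e}_i^T C(b)\boldsymbol{e}_j = c_{ij}$. So the map $(i,j)\mapsto(\tau(i),\tau(j))$ is a bijection of index pairs that carries the entries of $C(b)$ onto the entries of $C(b')$, preserving values. Therefore the multisets of entries coincide. The same bijection also preserves the diagonal versus off-diagonal distinction.

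There is no real obstacle here, since the substance lies entirely in the cited proposition. The only point needing care is the index bookkeeping in the last step, namely checking that conjugating by $T$ permutes rows and columns by the same permutation. As a remark, the same argument applied through Theorem \ref{thm-main} shows that $L_{C(b)}$, and hence $\Phi(b)$, are likewise permutation equivalent for conjugate pure braids. This is because $D_{TAT^T}=TD_AT^T$.
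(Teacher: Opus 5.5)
Your argument is correct and matches the intended derivation. The paper gives no separate proof and presents the statement as a corollary of the preceding permutation-equivalence proposition: $C(b')=TC(b)T^T$ with $T^T=T^{-1}$ gives a similarity (so rank, determinant, and characteristic polynomial agree) and a relabeling of rows and columns by the same permutation (so the multisets of entries agree), and your index check and the side remark $D_{TAT^T}=TD_AT^T$ are also right.
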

\medskip 

\noindent For non-pure braids, the following was shown in \cite{AS}. 

\medskip 
\begin{proposition}[\cite{AS}]
Let $r$, $r'$ be the orders of the braid permutations of $b$, $b' \in B_m$, respectively. 
If $b$ and $b' \in B_m$ are conjugate, then $C(b^r)$ and $C((b')^{r'})$ are permutation equivalent. 
\end{proposition}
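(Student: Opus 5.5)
The statement to prove is the last proposition from \cite{AS}: if $b$ and $b'$ are conjugate, then $C(b^r)$ and $C((b')^{r'})$ are permutation equivalent. I will reduce it to the pure case, i.e.\ to the preceding proposition of \cite{AS} for pure braids.

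First I check that $r=r'$. Write $b'=c^{-1}bc$. Then the braid permutation of $b'$ is a conjugate of that of $b$ in the symmetric group, since the map $B_m\to S_m$ is a homomorphism. Conjugate permutations have the same order, so $r=r'$. By definition of $r$, the braid permutation of $b^r$ is the identity, so $b^r\in P_m$. Likewise $(b')^{r}\in P_m$.

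Next, $(b')^r=(c^{-1}bc)^r=c^{-1}b^rc$. So $b^r$ and $(b')^r$ are pure braids conjugate in $B_m$, although the conjugator $c$ need not be pure. Applying the pure-braid proposition of \cite{AS}, stated just above, immediately gives that $C(b^r)$ and $C((b')^{r'})$ are permutation equivalent.

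The main point to watch is whether that earlier proposition allows conjugation by arbitrary $c\in B_m$, or only by pure $c$. Its wording, "$b,b'\in P_m$ are conjugate", together with the paper's definition of conjugacy with $c\in B_m$, indicates arbitrary $c$. To be safe, I would also sketch why this holds directly.

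Let $\pi$ be the braid permutation of $c$. Conjugation by a pure braid leaves $C$ unchanged, since $C(c^{-1}bc)=C(c^{-1})+C(b)+C(c)$ and $C(c^{-1})=-C(c)$ by Lemma~\ref{lem-CM-properties}. So it remains to handle the strand relabeling. I would argue diagrammatically: $C(c^{-1}bc)$ counts signed crossings of $b$ (strands renamed via $\pi$) plus the crossings of $c^{-1}$ and $c$. The crossings of $c^{-1}$ and $c$ are mirror images traversed by the same pairs of strands. This holds because $b$ is pure, so the strand entering the copy of $c$ at position $k$ is the same strand that left the copy of $c^{-1}$ at position $k$. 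Hence those crossings cancel in each entry. What remains is $T C(b) T^T$ with $T$ the permutation matrix of $\pi$.

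The delicate part of this sketch is checking that the over/under roles cancel in the correct ordered entry. The crossing matrix is only symmetric for pure braids, so this is exactly where the hypothesis that $b$ is pure enters.
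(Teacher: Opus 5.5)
Your proof is correct. The paper gives no argument for this proposition; it only quotes it from the literature. Your reduction is the natural way to obtain it from the preceding pure-braid proposition: conjugate braids have conjugate braid permutations, so $r=r'$, and $(c^{-1}bc)^r=c^{-1}b^rc$ with $b^r\in P_m$. That proposition's notion of conjugacy, as defined in this section, allows any conjugator $c\in B_m$, so it applies. (With the paper's convention of reading words top to bottom, $b\mapsto\pi_b$ reverses the order of composition. It still sends conjugates to conjugates, so nothing changes.)

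Your backup sketch is also right, but your closing remark misidentifies the key point: symmetry of the crossing matrix plays no role. Concatenating diagrams gives, for all $x,y\in B_m$ with $\pi_x$ the braid permutation of $x$,
\[
C(xy)_{ij}=C(x)_{ij}+C(y)_{\pi_x(i)\pi_x(j)}.
\]
Put $\pi=\pi_c$, so that $\pi_{c^{-1}}=\pi^{-1}$. Applying the formula to $c^{-1}c=1$ gives $C(c^{-1})_{ij}=-C(c)_{\pi^{-1}(i)\pi^{-1}(j)}$. Equivalently, the mirror diagram of $c^{-1}$ keeps the same strand over at each crossing and flips its sign.

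Purity of $b$ is used only to see that strand $i$ of $c^{-1}b$ enters $c$ at position $\pi^{-1}(i)$. Hence
\[
C(c^{-1}bc)_{ij}=-C(c)_{\pi^{-1}(i)\pi^{-1}(j)}+C(b)_{\pi^{-1}(i)\pi^{-1}(j)}+C(c)_{\pi^{-1}(i)\pi^{-1}(j)}=C(b)_{\pi^{-1}(i)\pi^{-1}(j)}.
\]
The cancellation happens in the same ordered entry automatically. Neither $C(b)$ nor $C(c)$ needs to be symmetric, and $C(c)$ generally is not, since $c$ need not be pure. Whether the resulting $T$ is the permutation matrix of $\pi$ or of $\pi^{-1}$ depends on conventions and does not affect permutation equivalence.
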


\subsection{Examples}

In this subsection, we compare the effectiveness of conjugacy invariants derived from the Burau matrix and the crossing matrix. 

\medskip 
\begin{example}
Let $b_3=(\sigma_1 \sigma_2^{-1})^3 \in P_3$. 
Then $C(b_3)=O_3=C(id_3)$. 
In this case, $b_3$ and $id_3 \in P_3$ have the same crossing matrix and therefore we cannot determine the conjugacy by the crossing matrix for this pair. 
For the Burau representation, we have 
\begin{align*}
f(b_3)= (f(\sigma_1 \sigma_2^{-1}))^3=
\begin{bmatrix}
-t^3+3t^2-3t+2 & -1+2t^{-1}-t^{-2} & t-3+4t^{-1}-3t^{-2}+t^{-3} \\
t^3-2t^2+t & 1 & -t+2-t^{-1} \\
-t^2+2t-1 & 1-2t^{-1}+t^{-2} & 2-3t^{-1}+3t^{-2}-t^{-3}
\end{bmatrix}.
\end{align*}
The trace of $f(b_3)$ is $-t^3+3t^2-3t+5-3t^{-1}+3t^{-2}-t^{-3}$. 
The identity braid $id_3$ has the Burau matrix $f(id_3)=I_3$ with trace $3$. 
Since their Burau matrices have different traces, we can conclude that $b_3$ and $id_3$ are not conjugate. 
\end{example}
\medskip 

\noindent Although the crossing matrix of a pure braid is determined by the Burau matrix by Corollary \ref{cor-cij}, conjugacy invariants based on the crossing matrix may be easier to compute and can distinguish some non-conjugate pairs that have the same characteristic polynomial and hence the same trace and determinant of the Burau matrix.

\medskip 
\begin{example}
Let $b_1=\sigma_1^6 \sigma_3^2$, $b_2=\sigma_1^2\sigma_2\sigma_1^2\sigma_2^{-1}\sigma_1^2\sigma_2\sigma_1^2\sigma_2^{-1} \in P_4$. 
The Burau representations are 
\begin{align*}
f(b_1)=
\begin{bmatrix}
t^6-t^5+t^4-t^3+t^2-t+1 & -t^5+t^4-t^3+t^2-t+1 & 0 & 0 \\
-t^6+t^5-t^4+t^3-t^2+t & t^5-t^4+t^3-t^2+t & 0 & 0 \\
0 & 0 & t^2-t+1 & -t+1 \\
0 & 0 & -t^2+t & t
\end{bmatrix}, 
\end{align*}
\begin{align*}
f(b_2)=
\begin{bmatrix}
t^6-t^4+t^3-t+1 & -t^4+t^3-t+1 & -t^4+t^3-t+1 & 0 \\
-t^5+t^4-t^2+t & t^3-t^2+t & t^3-2t^2+t & 0 \\
-t^6+t^5-t^3+t^2 & t^4-2t^3+t^2 & t^4-2t^3+2t^2 & 0 \\
0 & 0 & 0 & 1
\end{bmatrix}.
\end{align*}
The characteristic polynomials of $f(b_1)$ and $f(b_2)$ are both $(\lambda -1)^2(\lambda -t^6)(\lambda -t^2)$. 
On the other hand, the crossing matrices are 
\begin{align*}
C(b_1)=
\begin{bmatrix}
0 & 3 & 0 & 0 \\
3 & 0 & 0 & 0 \\
0 & 0 & 0 & 1 \\
0 & 0 & 1 & 0 
\end{bmatrix}, 
C(b_2)=
\begin{bmatrix}
0 & 2 & 2 & 0 \\
2 & 0 & 0 & 0 \\
2 & 0 & 0 & 0 \\
0 & 0 & 0 & 0 
\end{bmatrix}. 
\end{align*}
Since $C(b_1)$ and $C(b_2)$ have different multisets of entries, we can conclude that they are not permutation equivalent. 
Hence $b_1$ and $b_2$ are not conjugate. 
Note that the characteristic polynomials of $C(b_1)$ and $C(b_2)$ are $(\lambda^2-9)(\lambda^2-1)$ and $\lambda^2(\lambda^2-8)$, respectively. 
\end{example}

\section*{Acknowledgments}
This work was partially supported by the JSPS KAKENHI Grant Number JP21K03263.

\end{document}